\numberwithin{equation}{section}
\def\@cite#1#2{{\m@th\upshape\bfseries%
[{#1\if@tempswa{\m@th\upshape\mdseries, #2}\fi}]}}
\theoremstyle{plain}
\newtheorem{theorem}{Theorem}[section]
\newtheorem{corollary}[theorem]{Corollary}
\newtheorem{proposition}[theorem]{Proposition}
\newtheorem{lemma}[theorem]{Lemma}
\theoremstyle{definition}
\newtheorem{definition}[theorem]{Definition}
\newtheorem{remark}[theorem]{Remark}
\newtheorem*{acknow}{Acknowledgements}
\theoremstyle{remark}
\newcommand{\A}{{\mathcal{A}}}
\newcommand{\B}{{\mathcal{B}}}
\newcommand{\C}{{\mathcal{C}}}
\newcommand{\D}{{\mathcal{D}}}
\newcommand{\G}{{\mathcal{G}}}
\renewcommand{\H}{{\mathcal{H}}}
\newcommand{\K}{{\mathcal{K}}}
\renewcommand{\O}{{\mathcal{O}}}
\newcommand{\T}{{\mathcal{T}}}
\newcommand{\bN}{\mathbb{N}}
\newcommand{\bZ}{\mathbb{Z}}
\newcommand{\bbC}{{\mathbb{C}}}
\newcommand{\bbI}{{\mathbb{I}}}
\newcommand{\bbN}{{\mathbb{N}}}
\newcommand{\bbZ}{{\mathbb{Z}}}
\newcommand{\fK}{{\mathfrak{K}}}
\newcommand{\foral}{\text{ for all }}
\newcommand{\sot}{\textsc{sot}}
\newcommand{\cenv}{\mathrm{C}_e^*}
\newcommand{\Aut}{\operatorname{Aut}}
\newcommand{\dg}{\operatorname{diag}}
\newcommand{\id}{{\operatorname{id}}}
\newcommand{\oM}{\operatorname{M}}
\newcommand{\oZ}{\operatorname{Z}}
\newcommand{\proj}{\operatorname{proj}}
\newcommand{\spn}{\operatorname{span}}
\newcommand{\ca}{\mathrm{C}^*}
\newcommand{\Ad}{\operatorname{Ad}}
\newcommand{\hphi}{\hat{\phi}}
\newcommand{\sca}[1]{\left\langle#1\right\rangle} 
\begin{document}

\title{Stable isomorphisms of operator algebras}

\author[E.T.A. Kakariadis]{Evgenios T.A. Kakariadis}
\address{School of Mathematics, Statistics and Physics\\ Newcastle University\\ Newcastle upon Tyne\\ NE1 7RU\\ UK}
\email{evgenios.kakariadis@ncl.ac.uk}

\author[E.G. Katsoulis]{Elias~G.~Katsoulis}
\address{Department of Mathematics\\ East Carolina University\\ Greenville, NC 27858\\USA}
\email{katsoulise@ecu.edu}

\author[X. Li]{Xin Li}
\address{School of Mathematics and Statistics\\ University of Glasgow\\ Glasgow, G12 8QQ, Scotland\\ United Kingdom}
\email{Xin.Li@glasgow.ac.uk}

\subjclass[2020]{Primary: 47L40, 47L55, 47L65, 46L05}
\keywords{stable isomorphism, diagonal, semicrossed product, compact operator}


\maketitle

\begin{abstract} Let $\A$ and $\B$ be operator algebras with $c_0$-isomorphic diagonals and let $\K$ denote the compact operators. We show that if $\A\otimes\K$ and $\B\otimes\K$ are isometrically isomorphic, then $\A$ and $\B$ are isometrically isomorphic. If the algebras $\A$ and $\B$ satisfy an extra analyticity condition a similar result holds with $\K$ being replaced by any operator algebra containing the compact operators. For non-selfadjoint graph algebras this implies that the graph is a complete invariant for various types of isomorphisms, including stable isomorphisms, thus strengthening a recent result of Dor-On, Eilers and Geffen. Similar results are proven for algebras whose diagonals satisfy cancellation and have $K_0$-groups isomorphic to $\bbZ$. This has implications in the study of stable isomorphisms between various semicrossed products.
\end{abstract}

\section{Introduction}
There are two lines of inquiry that motivate the present work. Initial motivation comes from recent results of Dor-On, Eilers and Geffen \cite{DEG} that address the hierarchy of various types of isomorphisms between (selfadjoint and non-selfadjoint) operator algebras, with an eye on non-selfadjoint graph algebras and their stable isomorphisms (a previously intractable problem, as the authors comment in the introduction of their paper). In \cite[Theorem 6.4]{DEG} they show that for row-finite graphs, a stable isomorphism between their tensor algebras implies that the corresponding graphs are isomorphic. The proof of this result involves significant $K$-theoretic considerations, an earlier result of the second-named author and Kribs~\cite{KKribs} and relies strongly on the row-finiteness of the graphs involved. Indeed, in \cite[Example 6.5]{DEG}, the authors show that the selfadjoint considerations in their proof of \cite[Theorem 6.4]{DEG} are not valid for graphs which are not row-finite, thus rendering obsolete the technique of their proof for such graphs. Nevertheless this does not exclude the possibility that the ``non-selfadjoint" statement of  \cite[Theorem 6.4]{DEG} is indeed valid with a different proof, thus raising the question whether a stable isomorphism between tensor algebras of arbitrary graphs implies that the graphs are isomorphic.

The other source of motivation originates in the work of the second-named author and Ramsey on non-selfadjoint crossed products~\cite{KR16}. Let $X$ be a locally compact Hausdorff space and let $\sigma :X \rightarrow X$ be a homeomorphism. Let $\K$ denote the compact operators on a separable Hilbert space and let $\K^+$ denote the upper triangular compact operators with respect to a $\bbZ$-ordered orthonormal basis. One can form now a non-selfadjoint crossed product algebra 
\begin{equation} \label{eq;KRcp}
(C(X) \otimes \K^+) \rtimes_{\sigma \otimes \Ad \lambda}\bbZ,
\end{equation}
where $\lambda$ denotes the left regular representation of $\bbZ$. This natural class of non-selfadjoint crossed products begs to be classified and in \cite{KR16} the first step was taken by showing that 
\begin{equation}\label{eq;KR16}
(C(X)\otimes \K^+) \rtimes_{\sigma \otimes \Ad \lambda}\bbZ \simeq (C(X) \otimes_{\sigma} \bbZ^+)\otimes \K,
\end{equation}
where $C(X) \otimes_{\sigma} \bbZ^+$ denotes the semicrossed product of Arveson~\cite{Arv67} and Peters~\cite{Pet}. (The equation \eqref{eq;KR16} follows from the last line in the proof of \cite[Theorem 2.12]{KR16}.) Therefore the classification of the crossed products in (\ref {eq;KRcp}) becomes a problem in the classification theory of semicrossed products up to stable isomorphism. This brings a new perspective to the classification problem for stable isomorphisms between operator algebras and reinforces its study.

Both of the above lines of inquiry are accommodated in this note. First we show that the non-selfadjoint part of Theorem~6.4 of Dor-On, Eilers and Geffen \cite{DEG} is indeed valid beyond row-finite graphs. This comes as a corollary of a more general result that shows that if $\A$ and $\B$ are operator algebras with diagonals isomorphic to $c_0$ so that $\A\otimes\K$ and $\B\otimes\K$ are isometrically isomorphic, then $\A$ and $\B$ are isometrically isomorphic. The proof is elementary and does not require the use of $K$-theory\footnote{\ As we shall see shortly, $K$-theoretic considerations do enter in our study of stable isomorphisms, thus vindicating the intuition of \cite{DEG}.}. If the algebras $\A$ and $\B$ satisfy an extra analyticity condition (Definition~\ref{defn;analytic}), a similar result holds with $\K$ being replaced by any operator algebra containing the compact operators. The proof of this result is more involved and we consider it as the central result of in this line of research.

We also address the stable isomorphism problem for semicrossed products, with an eye on to the classification of the crossed products in (\ref{eq;KRcp}). In Theorem~\ref{thm;smain} we consider unital operator algebras $\A$ and $\B$ whose diagonals have cancellation, their $K_0$-groups are isomorphic to $\bbZ$ and their units belong to the same $K_0$-class, as elements of the diagonal. For such algebras we show that $\A\otimes\K$ and $\B\otimes\K$ are completely isometrically isomorphic if and only if $\A$ and $\B$ are completely isometrically isomorphic. If $X$ is a contractible compact Hausdorff space then $C(X)$ satisfies cancellation, $K_0(C(X))\simeq \bbZ$ and $1\in C(X)$ belongs to the class of the positive generator of $K_0(C(X))$. This allows us to use Theorem~\ref{thm;smain} in order to classify the crossed products of \eqref{eq;KRcp}, provided that $X$ is a contractible, compact Hausdorff space (see Corollary~\ref{motivation2}). Actually, for algebras whose diagonal is of the form $C(X)$, with $X$ contractible, compact Hausdorff space, we obtain a more general result in Theorem~\ref{thm;contract}, whose proof does not involve the use of $K$-theory.

 In this paper we assume that all of our operator algebras, including $\ca$-algebras, are non-degenerately represented on a Hilbert space. In order to avoid set theoretic complications and achieve a smooth presentation, we fix a separable Hilbert space $\H$ and we denote by $\K$ the compact operators acting on $\H$; these will be always referred to as ``\textit{the compact operators}". Furthermore when we say that an operator algebra ``\textit{contains the compact operators}", we mean that the algebra is acting on that separable Hilbert space $\H$ and contains $\K$.

All operator algebras in this paper are approximately unital, i.e., they posses a contractive approximate unit. If $\A$ is an operator algebra, then $\dg\A$ will denote its diagonal, i.e., $\dg\A :=\A\cap \A^*$. Note that the diagonal does not depend on the particular isometric representation of $\A$ and it is therefore well-defined. (See the first paragraph of the proof of Lemma~\ref{wk} below.) If $\A$ happens to be a $\ca$-algebra, then $\oM(\A)$ will denote its multiplier algebra. The symbol $\otimes $ is reserved for the spatial tensor product of operator algebras, i.e., if $\A_i$, $i = 1,2$, are (completely isometrically represented) operator algebras on Hilbert spaces $\H_i$, $i=1,2$, then $\A_1\otimes\A_2$ will denote the norm closed subalgebra of $B(\H_1\otimes\H_2)$ generated by elementary tensors of the form $a_1\otimes a_2$, $a_i\in \A_i$, $i = 1,2$. As with the case of $\ca$-algebras, the spatial tensor product of operator algebras does not depend on the specific (completely isometric) representations used to define it. The closure of $\A_1\otimes\A_2 \subseteq B(\H_1\otimes \H_2)$ in the strong operator topology will be denoted by $\A_1\bar{\otimes}\A_2$.

\section{Algebras with small diagonals}
We start with two results which in one form or another are known. We include proofs for completeness.

\begin{lemma} \label{wk}
Let $\A, \B$ be operator algebras and let $\phi\colon \A \rightarrow \B$ be an isometric isomorphism. Then $\phi(\dg\A)=\dg\B$ and the restriction of $\phi$ on $\dg\A$ is a $*$-isomorphism onto $\dg\B$.
\end{lemma}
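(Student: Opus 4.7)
The plan is to show that $\dg\A$ is already determined by $\A$ as an approximately unital Banach algebra, so that any isometric algebra isomorphism automatically preserves it. The key input is the classical fact that in a unital operator algebra $\C\subseteq B(H)$ the self-adjoint part of the diagonal admits the purely Banach-algebraic description
\begin{equation*}
(\dg\C)_{sa}=\bigl\{h\in\C:\lVert\exp(ith)\rVert=1\text{ for all }t\in\bbR\bigr\},
\end{equation*}
i.e.\ it coincides with the set of \emph{hermitian} elements of $\C$. (Indeed the right-hand side is the set of self-adjoint operators of $B(H)$ lying in $\C$, and $\dg\C=\C\cap\C^*$.)

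To apply this when $\A$ and $\B$ are only approximately unital, I would first extend $\phi$ to a unital isometric isomorphism $\phi^+\colon\A^+\to\B^+$ of the operator algebra unitizations by setting $\phi^+(a+\lambda 1):=\phi(a)+\lambda 1$. Multiplicativity is formal, and isometry can be checked by realising the unitization norm via left multiplication: for $a\in\A$ and $\lambda\in\bbC$,
\begin{equation*}
\lVert a+\lambda 1\rVert_{\A^+}=\sup_{\lVert x\rVert_\A\le 1}\lVert ax+\lambda x\rVert_\A,
\end{equation*}
and the isometry and bijectivity of $\phi$ turn the right-hand side into the corresponding supremum over the unit ball of $\B$.

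With $\phi^+$ in hand, the hermitian description above gives $\phi^+\bigl((\dg\A^+)_{sa}\bigr)=(\dg\B^+)_{sa}$. Since $(\dg\A^+)_{sa}=(\dg\A)_{sa}\oplus\bbR\cdot 1$ and $\A\cap\bbC\cdot 1=\{0\}$ when $\A$ is non-unital (the unital case needs no unitization at all), restricting $\phi^+$ to $\A$ forces $\phi\bigl((\dg\A)_{sa}\bigr)=(\dg\B)_{sa}$. Complex linearity of $\phi$ together with the decomposition $\dg\A=(\dg\A)_{sa}+i(\dg\A)_{sa}$ then gives $\phi(\dg\A)=\dg\B$. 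For the $*$-preservation, write $a=h_1+ih_2$ with $h_j\in(\dg\A)_{sa}$; then
\begin{equation*}
\phi(a^*)=\phi(h_1)-i\phi(h_2)=\bigl(\phi(h_1)+i\phi(h_2)\bigr)^*=\phi(a)^*.
\end{equation*}

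The main non-trivial ingredient is the Banach-algebraic characterization of $(\dg\C)_{sa}$ as the hermitian elements of $\C$; this is standard Vidav/Lumer–Sinclair material but deserves an explicit reference. Everything else is bookkeeping, and, crucially, makes no reference to the specific Hilbert space representations of $\A$ and $\B$, which is what also certifies representation-independence of the diagonal in the first place.
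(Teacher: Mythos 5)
Your proof is correct, but it takes a genuinely different route from the paper's. The paper also passes to the unitization, but it cites Meyer's theorem for the isometry of the unitized map and then detects the diagonal through its \emph{unitaries}: in a unital operator algebra the unitaries of the diagonal are precisely the invertible contractions whose inverses are contractions lying in the algebra, and these span the diagonal. You instead detect the \emph{self-adjoint part} of the diagonal as the hermitian elements ($\|\exp(ith)\|=1$ for all real $t$), and you reprove the relevant case of Meyer's theorem by hand via the multiplier-norm formula $\|a+\lambda 1\|=\sup_{\|x\|\leq 1}\|ax+\lambda x\|$ --- note that this formula, and hence your isometry check, silently uses the paper's blanket assumptions (contractive approximate unit, non-degenerate representation). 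Both characterizations are purely metric-algebraic, which is exactly what certifies representation-independence of the diagonal. The paper's unitary route is marginally more elementary, needing only that an invertible contraction with contractive inverse on Hilbert space is unitary, whereas yours differentiates the unitary group $t\mapsto e^{ith}$ at $0$; on the other hand your route yields the $*$-preservation very cleanly from $a=h_1+ih_2$. Two points to tighten: the decomposition $(\dg\A^+)_{sa}=(\dg\A)_{sa}\oplus\bbR\cdot 1$ requires slightly more than $\A\cap\bbC\cdot 1=\{0\}$ (one must rule out $a^*+\bar{\lambda}1\in\A+\bbC 1$ with $a^*\notin\A$, which again uses the approximate unit); in fact your argument only needs the simpler observation that a hermitian element of $\B^+$ which happens to lie in $\B$ is a self-adjoint operator in $\B$, hence lies in $(\dg\B)_{sa}$. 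And, as you say yourself, the hermitian characterization deserves an explicit reference (Vidav/Lumer--Sinclair, or Blecher--Le Merdy).
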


\begin{proof}
Recall that in a unital operator algebra $\A$, the unitary elements in the diagonal of $\A$ coincide with the invertible contractions whose inverses are also contractions and contained in  $\A$. Furthermore, these elements generate the diagonal as a vector space.

Let $\widetilde{\A}= \A\oplus \bbC$ and $\widetilde{\B}= \B\oplus \bbC$ be the unitizations of $\A$ and $\B$ respectively that leave the original algebras as  proper ideals in the unitization. Note that $\dg\widetilde{\A} = \widetilde{\dg \A }$ and similarly for $\widetilde{\B}$.

 If $\widetilde{\phi}:\widetilde{\A}\rightarrow \widetilde{\B}$ is the unitization of $\phi$, then Meyer's Theorem \cite[Corollary 2.1.15]{BLM04} shows that $\widetilde{\phi}$ is also an isometry. Hence, the first paragraph of the proof implies that $\widetilde{\phi}$ preserves unitary elements and their adjoints. Therefore $\widetilde{\phi}(\dg \widetilde{\A})=\dg\widetilde{\B}$ and the restriction of $\widetilde{\phi}$ on $\dg\widetilde{\A}$ is a $*$-isomorphism onto $\dg\widetilde{\B}$. The conclusion now follows by restricting $\widetilde{\phi}$ on $\dg\A$.  \end{proof}

\begin{lemma} \label{l;dg}
If $\A$ is an operator algebra, then $$\dg( \A \otimes \K ) =( \dg\A) \otimes \K.$$
\end{lemma}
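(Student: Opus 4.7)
The plan is to prove both inclusions separately, the non-trivial direction being $\dg(\A \otimes \K) \subseteq (\dg \A) \otimes \K$. The inclusion $(\dg \A) \otimes \K \subseteq \dg(\A \otimes \K)$ is immediate: since $\K^* = \K$, the subspace $(\dg \A) \otimes \K$ is self-adjoint and contained in $\A \otimes \K$, and therefore it sits inside $(\A \otimes \K) \cap (\A \otimes \K)^* = \dg(\A \otimes \K)$.

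For the reverse inclusion I would extract ``matrix entries''. Fix an orthonormal basis $\{e_n\}$ of $\H$, with associated rank-one projections $p_n \in \K$ and matrix units $e_{ij}$. For any $T \in \A \otimes \K$, consider the compression $T_{ij} := (I \otimes p_i)\, T\, (I \otimes p_j)$. On an elementary tensor one computes $(I \otimes p_i)(a \otimes k)(I \otimes p_j) = \langle k e_j, e_i \rangle\, a \otimes e_{ij}$, so the slice map sends $\A \otimes \K$ into the closed subspace $\A \otimes \bbC e_{ij}$. By linearity and norm-continuity, $T_{ij} = a_{ij} \otimes e_{ij}$ for some uniquely determined $a_{ij} \in \A$. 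This Fubini-type identification is the one technical point requiring care; it follows by approximating $T$ in norm by finite sums of elementary tensors. Now, when $T \in \dg(\A \otimes \K)$, also $T^* \in \A \otimes \K$, so the same procedure applied to $T^*$ yields $(T^*)_{ij} = (a_{ji})^* \otimes e_{ij}$ with $(a_{ji})^* \in \A$. Consequently each $a_{ij}$ lies in $\A \cap \A^* = \dg \A$.

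To conclude, set $P_N := \sum_{i=1}^N p_i$. Then the finite truncations
\begin{equation*}
(I \otimes P_N)\, T\, (I \otimes P_N) \;=\; \sum_{i, j \leq N} a_{ij} \otimes e_{ij}
\end{equation*}
belong to $(\dg \A) \otimes \K$, and a standard $3\varepsilon$-argument --- using density of finite sums of elementary tensors in $\A \otimes \K$ together with $\|P_N k P_N - k\| \to 0$ for every compact $k$ --- shows that these truncations converge to $T$ in norm. Since $(\dg \A) \otimes \K$ is norm-closed, this forces $T \in (\dg \A) \otimes \K$ and finishes the argument. The only substantive obstacle is the Fubini-type entry-extraction step; everything else is bookkeeping.
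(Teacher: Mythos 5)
Your proof is correct and follows essentially the same route as the paper's: compress by the diagonal rank-one projections in the $\K$ factor to extract matrix entries, apply the argument to both $T$ and $T^*$ to conclude that each entry lies in $\A \cap \A^* = \dg\A$, and recover $T$ as the norm limit of its finite truncations inside the closed subspace $(\dg\A)\otimes\K$. The only cosmetic difference is that you compress with $I \otimes p_i$, where $I$ is the identity operator on the representing Hilbert space, rather than with $1 \otimes e_{ii}$, which lets you bypass the paper's separate treatment of the non-unital case via approximate units.
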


\begin{proof}
Let $\{ e_i\}_{i \in \bbN}$ be an orthonormal basis for $\H$ and let $e_{ij}$ denote the rank-one operator defined by $e_{ij}(x) = \langle x, e_j\rangle e_i$, $i,j \in \bbN$.

Assume first that $\A$ is unital  and let $a \in \dg\big( \A \otimes \K \big)$. Clearly $1\otimes e_{i j} \in \dg\big( \A \otimes \K \big)$ for all $i, j\in \bbN$ and so 
\[
a_{i j}\otimes e_{i j} \equiv (1\otimes e_{i i} )a(1\otimes e_{j j} ) \in \dg\big( \A \otimes \K \big).
\]
Since $\big(1\otimes \sum_{i=1}^n e_{ii}\big)_{n \in \bbN}$ is an approximate unit for $ \dg\big( \A \otimes \K \big)$, we are to show that $a_{i j} \in \dg \A$, for all $i,j \in \bbN$.

Consider finite families $\{ b_{st}^{(n)}\}_{s t}$, $s, t \in \bbN$, $n \in \bbN$, in $\A$ so that 
\[
a = \lim_n \sum _{s,t} ({b_{st}^{(n)}})^*\otimes e_{st}.
\]
Then, 
\begin{align*}
a_{ij}\otimes e_{ij}&= (1\otimes e_{i i} )a(1\otimes e_{j j} ) \\
            &= \lim_n (1\otimes e_{i i} )\big(\sum _{s, t} ({b_{st}^{(n)}})^*\otimes e_{st}\big)(1\otimes e_{j j} ) \\
            &= \lim_n \, ({b_{ij}^{(n)}})^*\otimes e_{ij}
            \end{align*}
and so $a_{i j} =  \lim_n \, ({b_{ij}^{(n)}})^* \in \A^*$, as desired. 

If $\A$ is not unital, then one replaces $1\in \A$ with an approximate unit for the $\ca$-algebra $\dg \A$ consisting of selfadjoint operators and then repeats the same arguments as above by taking limits.
\end{proof}

If $\A$ is an operator algebra, then $\oZ(\A)$ will denote its center. Recall that if $\C$ is a $\ca$ algebra which is (non-degenerately) represented on a Hilbert space $\H_{\C}$, then we may identify the multiplier algebra $\oM(\C)$ with the \textit{idealizer} of $\C$
\[
\{T\in B(\H_{\C}) \mid Tc\in \C \mbox{ and } cT \in \C, \mbox{ for all } c \in \C\}.
\]
In particular, $\oM(\C) \subseteq \C''$. See Section II.7.3 of \cite{Black} for more details.
The following is a standard result and we include a proof for the convenience of the reader.

\begin{lemma} \label{lem;3inter}
Let $\C$ be a $\ca$-algebra and let $\oM(\C)$ denote its multiplier algebra. Then 
\begin{equation} \label{eq;centerB}
\oZ(\oM(\C))= \oZ(\C'') \cap  \oM(\C).
\end{equation}
\end{lemma}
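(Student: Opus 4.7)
The plan is to prove the two inclusions separately, leveraging the two facts already noted in the excerpt: $\oM(\C)$ lives between $\C$ and $\C''$, i.e.\ $\C \subseteq \oM(\C) \subseteq \C''$, and the von~Neumann bicommutant theorem, which in particular gives $\C' = (\C'')'$; equivalently, an operator in $B(\H_{\C})$ commutes with $\C$ if and only if it commutes with $\C''$.

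For the inclusion $\oZ(\C'') \cap \oM(\C) \subseteq \oZ(\oM(\C))$, suppose $T \in \oZ(\C'') \cap \oM(\C)$. Because $\oM(\C) \subseteq \C''$ and $T$ commutes with every element of $\C''$, it commutes in particular with every element of $\oM(\C)$. Since $T$ itself lies in $\oM(\C)$, we get $T \in \oZ(\oM(\C))$.

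For the reverse inclusion $\oZ(\oM(\C)) \subseteq \oZ(\C'') \cap \oM(\C)$, let $T \in \oZ(\oM(\C))$. First, $T \in \oM(\C) \subseteq \C''$, so it remains to check $T \in \oZ(\C'')$. By definition $T$ commutes with every element of $\oM(\C)$; in particular, as $\C \subseteq \oM(\C)$, we see $T \in \C'$. The double commutant identity $\C' = (\C'')'$ then forces $T$ to commute with every element of $\C''$, i.e.\ $T \in \oZ(\C'')$.

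This proof is essentially a bookkeeping exercise, so there is no serious obstacle; the only point requiring a little care is that one needs the non-degeneracy of the representation of $\C$ on $\H_{\C}$ to identify $\oM(\C)$ with the idealizer of $\C$ inside $B(\H_{\C})$ and thus to place $\oM(\C)$ inside $\C''$, as the excerpt already indicates. Given this, both inclusions reduce to direct applications of the commutation relations and the bicommutant theorem.
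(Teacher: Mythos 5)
Your proof is correct and follows essentially the same route as the paper's: both inclusions are commutant bookkeeping resting on $\oZ(\C'')=\C'\cap\C''$ and the chain $\C\subseteq\oM(\C)\subseteq\C''$ (valid once $\oM(\C)$ is identified with the idealizer via non-degeneracy, as you note). The only cosmetic difference is that for $\oZ(\C'')\cap\oM(\C)\subseteq\oZ(\oM(\C))$ the paper upgrades commutation with $\C$ to commutation with $\oM(\C)$ by strict density, whereas you obtain it directly from $\oM(\C)\subseteq\C''$; both are fine.
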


\begin{proof}
Since $\C \subseteq \oM(\C) \subseteq \C''$ we have that $\oZ(\oM(\C)) \supseteq \oZ(\C'') \cap  \oM(\C)$.
Moreover $\oM(\C)$ is dense in $\C''$ in the strong operator topology, and thus $\oZ(\oM(\C)) \subseteq \oZ(\C'') \cap  \oM(\C)$, as the product is separately continuous with respect to the strong operator topology.
\end{proof}

\begin{proposition} \label{cor;repl}
Let $\C$ and $\fK$ be $\ca$-algebras and assume that $\fK$ contains the compact operators. Then 
\[
\oZ(\oM(\C \otimes \fK) ) =  \oZ(\oM(\C) ) \otimes \bbC I .
\]
\end{proposition}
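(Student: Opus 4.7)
The plan is to apply Lemma~\ref{lem;3inter} with $\C\otimes\fK$ in place of $\C$, which gives
$$\oZ(\oM(\C\otimes\fK))=\oZ\bigl((\C\otimes\fK)''\bigr)\cap\oM(\C\otimes\fK),$$
and then analyze each of the two factors on the right.

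For the first factor I would combine three standard von Neumann algebraic facts. The double commutant of a spatial tensor product equals the von Neumann tensor product of the double commutants, so $(\C\otimes\fK)''=\C''\bar{\otimes}\fK''$; the Misonou--Tomita commutation theorem gives $\oZ(M\bar{\otimes}N)=\oZ(M)\bar{\otimes}\oZ(N)$ for von Neumann algebras $M,N$; and because $\fK$ contains the compact operators $\K$, which act irreducibly on $\H$, we have $\fK''\supseteq\K''=B(\H)$, forcing $\fK''=B(\H)$ and $\oZ(\fK'')=\bbC I_\H$. These together yield
$$\oZ\bigl((\C\otimes\fK)''\bigr)=\oZ(\C'')\bar{\otimes}\bbC I_\H=\oZ(\C'')\otimes\bbC I_\H,$$
so every central multiplier of $\C\otimes\fK$ has the form $T\otimes I_\H$ for a unique $T\in\oZ(\C'')$, and the problem reduces to characterizing which such $T$ satisfy $T\otimes I_\H\in\oM(\C\otimes\fK)$.

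The main step is the claim that this happens precisely when $T\in\oM(\C)$. One direction is an immediate verification on elementary tensors (extended by continuity). For the other direction, which is the heart of the argument, I would fix a unit vector $\xi\in\H$, set $p=\xi\xi^*\in\K\subseteq\fK$, and use the slice map
$$S_\xi\colon B(\H_\C\otimes\H)\longrightarrow B(\H_\C),\qquad\langle S_\xi(X)\eta_1,\eta_2\rangle:=\langle X(\eta_1\otimes\xi),\eta_2\otimes\xi\rangle,$$
which is normal, completely positive, bounded by $1$, and satisfies $S_\xi(a\otimes b)=\langle b\xi,\xi\rangle\,a$. On the algebraic tensor product $\C\odot\fK$ it clearly takes values in $\C$, so by norm continuity $S_\xi(\C\otimes\fK)\subseteq\C$. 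If $T\otimes I_\H$ is a multiplier of $\C\otimes\fK$, then for every $c\in\C$ the element $(T\otimes I_\H)(c\otimes p)=Tc\otimes p$ lies in $\C\otimes\fK$; applying $S_\xi$ yields $Tc\in\C$, and the symmetric argument on the right gives $cT\in\C$, so $T\in\oM(\C)$.

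Combining $T\in\oZ(\C'')$ with $T\in\oM(\C)$ and applying Lemma~\ref{lem;3inter} to $\C$ itself gives $T\in\oZ(\oM(\C))$, and the reverse inclusion $\oZ(\oM(\C))\otimes\bbC I_\H\subseteq\oZ(\oM(\C\otimes\fK))$ is routine. The delicate point---and the place where the hypothesis that $\fK$ contains $\K$ is genuinely used---is the step extracting $Tc\in\C$ from $Tc\otimes p\in\C\otimes\fK$: the presence of a rank-one projection in $\fK$ is exactly what makes the slice-map calculation yield an element back in $\C$ rather than merely in $\C''$.
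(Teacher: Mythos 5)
Your argument is correct and follows essentially the same route as the paper's: both reduce via Lemma~\ref{lem;3inter} to computing $\oZ((\C\otimes\fK)'')\cap\oM(\C\otimes\fK)$, identify $\oZ((\C\otimes\fK)'')=\oZ(\C'')\otimes\bbC I$ using that $\fK''=B(\H)$, and then use a rank-one projection $p\in\K\subseteq\fK$ to show that $T\otimes I\in\oM(\C\otimes\fK)$ forces $T\in\oM(\C)$. Your slice map $S_\xi$ is just a repackaging of the paper's compression $(I\otimes p)(\,\cdot\,)(I\otimes p)$ onto the corner $\C\otimes\bbC p\simeq\C$, so the two proofs coincide in substance.
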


\begin{proof}
Since $(\C \otimes \fK)' = \C'\otimes \bbC I$ and $(\C \otimes \fK)'' = \C'' \bar{\otimes} B(\H) $, we have 
\begin{equation} \label{eq;repl1}
\oZ( (\C\otimes \fK)'') = (\C \otimes \fK)' \cap (\C \otimes \fK)'' = (\C' \cap\C'' )\otimes \bbC I = \oZ(\C'' )\otimes \bbC I .
\end{equation}
Using a minimal projection $p \in \fK$, one can verify that 
\begin{equation} \label{eq;repl2}
( \oZ(\C'' )\otimes \bbC I ) \cap \oM(\C \otimes \fK) \subseteq (\oZ(\C'') \cap\oM(\C) )\otimes \bbC I .
 \end{equation}
 Indeed, let $S \in \oZ(\C'')$ so that $S\otimes I \in \oM(\C\otimes \fK)$. Then for any $c \in \C$ we have 
 \[
 (S\otimes I)(c \otimes p) , (c \otimes p )(S \otimes I) \in \C \otimes \fK
 \]
 and so
 \[
 Sc\otimes p , cS \otimes p \in (I \otimes p)(\C\otimes \fK)(I \otimes p)=\C \otimes \bbC p,
 \]
 from which we deduce that $Sc, cS \in \C$, i.e., $S \in \oM(\C)$, as desired.
 
 Now by using (\ref{eq;repl1}), we may replace $ \oZ(\C'' )\otimes \bbC I  $ in (\ref{eq;repl2}) with $\oZ( (\C\otimes \fK)'') $ to obtain
 \[
\oZ( (\C\otimes \fK)'')  \cap \oM(\C \otimes \fK) \subseteq (\oZ(\C'') \cap\oM(\C) )\otimes \bbC I ,
 \]
 which with the aid of Lemma~\ref{lem;3inter} transforms to 
 \[
 \oZ(\oM(\C \otimes \fK) ) \subseteq \oZ(\oM(\C) ) \otimes \bbC I .
 \]
 In order to prove the reverse inclusion, note that if $S \in \oZ(\oM(\C) )$, then clearly $S\otimes I \in \oM(\C \otimes \fK) $. Furthermore, if $c\otimes k \in \C \otimes \fK$ then
 \begin{equation} \label{eq;repl3}
 (S\otimes I)(c\otimes k)=(c\otimes k)(S\otimes I),
 \end{equation}
 because $S$ commutes with every element in $\oM(\C)$ and thus with every element in $\C$. Now $\C\otimes \fK$ is strictly dense in $\oM(\C\otimes \fK)$. Therefore (\ref{eq;repl3}) implies that $S \otimes I \in  \oZ(\oM(\C \otimes \fK) ) $ and we are done. 
\end{proof}

For the rest of the paper we make the blanket assumption that all operator algebras are approximately unital with a contractive approximate unit contained in the diagonal. 

We say that an operator algebra $\A$ has a \textit{$c_0$-isomorphic diagonal} if there exists a perhaps finite set $\Gamma$ so that $\dg\A \simeq c_0(\Gamma)$. Due to our blanket assumption, we see that an operator algebra $\A$ with $c_0$-isomorphic diagonal necessarily contains an approximate identity consisting of commuting selfadjoint operators.\footnote{\ Many algebras of current interest, including various tensor algebras of non-degenerate product systems \cite{DK20, DKKLL}, satisfy this requirement.
Further examples include non-selfadjoint algebras with trivial diagonal, e.g., \cite{DRS11, DRS15, SS16, SSS18, SSS20}. As we are about to show, stable isometric isomorphisms implie isometric isomorphisms in these cases.} 
 
 \begin{theorem} \label{thm;main1}
Let $\A$ and $\B$ be operator algebras with $c_0$-isomorphic diagonals. If $\A\otimes\K$ and $\B\otimes\K$ are isometrically isomorphic, then $\A$ and $\B$ are isometrically isomorphic. 
 \end{theorem}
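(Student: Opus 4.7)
The plan is to extract an isometric isomorphism $\psi \colon \A \to \B$ from $\phi$ by compressing $\phi(a \otimes p)$, for a fixed rank-one projection $p \in \K$, back to a one-dimensional corner of the $\K$-factor, after conjugating by a suitable partial isometry from the diagonal.

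By Lemmas \ref{wk} and \ref{l;dg}, the restriction $\phi_0 := \phi|_{\dg \A \otimes \K}$ is a $*$-isomorphism onto $\dg \B \otimes \K$. Write $\dg \A \simeq c_0(\Gamma_A)$ with minimal projections $\{ e_\gamma \}_{\gamma \in \Gamma_A}$ and $\dg \B \simeq c_0(\Gamma_B)$ with minimal projections $\{ f_\delta \}_{\delta \in \Gamma_B}$. Extending $\phi_0$ canonically to multiplier algebras and applying Proposition \ref{cor;repl}, this extension sends $\oM(\dg \A) \otimes \bbC I$ onto $\oM(\dg \B) \otimes \bbC I$, inducing a bijection $\sigma \colon \Gamma_A \to \Gamma_B$. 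Since $\phi_0$ preserves minimality of projections, there exist rank-one $p_\gamma \in \K$ with $\phi_0(e_\gamma \otimes p) = f_{\sigma(\gamma)} \otimes p_\gamma$, and passing to strict limits, $1 \otimes p$ is sent to $E := \sum_\delta f_\delta \otimes p_{\sigma^{-1}(\delta)} \in \oM(\dg \B \otimes \K)$.

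Writing $p = \xi \xi^*$ and $p_\gamma = \xi_\gamma \xi_\gamma^*$ for unit vectors in $\H$, I form
\[
W \,:=\, \sum_\delta f_\delta \otimes \xi_{\sigma^{-1}(\delta)} \xi^* \;\in\; \oM(\dg \B \otimes \K),
\]
which by direct calculation is a partial isometry satisfying $W^*W = 1 \otimes p$ and $WW^* = E$. I then define $\psi \colon \A \to \B$ by
\[
\psi(a) \otimes p \,:=\, W^* \phi(a \otimes p)\, W,
\]
interpreted as an element of $(1 \otimes p)(\B \otimes \K)(1 \otimes p) = \B \otimes p \simeq \B$. Multiplicativity of $\psi$ will reduce to the identity $E\, \phi(a \otimes p) = \phi(a \otimes p) = \phi(a \otimes p)\, E$, which I plan to prove by approximating $a$ by $Q_F a$ with $Q_F := \sum_{\gamma \in F} e_\gamma$ for $F \subseteq \Gamma_A$ finite, applying $\phi$, and passing to the norm limit; the blanket assumption guarantees that $\{Q_F\}$ is a contractive approximate unit for $\A$. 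Isometry then follows from the reconstruction $\phi(a \otimes p) = W(\psi(a) \otimes p) W^*$, and surjectivity from the equality $E (\B \otimes \K) E = \phi((1 \otimes p)(\A \otimes \K)(1 \otimes p)) = \phi(\A \otimes p)$.

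The main technical obstacle is verifying that $W$, and likewise $E$, belongs to $\oM(\B \otimes \K)$ rather than merely to $\oM(\dg \B \otimes \K)$, so that all products above make sense inside $\B \otimes \K$. For this I plan to compute
\[
W \cdot (b \otimes k) \,=\, \sum_\delta (f_\delta b) \otimes (\xi_{\sigma^{-1}(\delta)} \xi^* k),
\]
which is a finite sum in $\B \otimes \K$ whenever $b$ has finite matrix support against $\{f_\delta\}$; combined with the blanket assumption that the approximate unit of $\dg \B$ is also one for $\B$, boundedness of $W$ then yields $W \cdot (b \otimes k) \in \B \otimes \K$ in general, and the analogous argument treats $(b \otimes k) \cdot W$.
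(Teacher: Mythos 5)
Your proposal is correct and follows essentially the same route as the paper: the same reduction to the diagonal via Lemmas \ref{wk} and \ref{l;dg}, the same matching of minimal central projections of $\oM(\dg(\A\otimes\K))$ via Proposition \ref{cor;repl}, and the same compression to a rank-one slice of the $\K$-factor, with the same technical point (your $W$ multiplying $\B\otimes\K$ into itself; the paper's conjugation by $U=\sum_i q_i\otimes u_i$) resolved by the same appeal to the approximate unit sitting in the diagonal. The only real difference is packaging: you fold the paper's unitary correction $U$ and the compression by $1\otimes e$ into the single partial isometry $W$, so that multiplicativity and isometry of $\psi$ drop out of the identity $E\,\phi(a\otimes p)\,E=\phi(a\otimes p)$ rather than from the paper's corner maps $\phi_{ij}$ and the dense-span computation for $\hphi$ --- a slightly slicker but equivalent argument.
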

 
 \begin{proof}Assume that $\A\otimes\K$ and $\B\otimes\K$ are isometrically isomorphic via a map 
 \[
 \phi:\A\otimes\K \longrightarrow \B\otimes\K.
 \]
 We are to prove that $\A$ and $\B$ are isomorphic. Let $\{p_i\}_{ i \in \Gamma_{\A}}$ and $\{q_j\}_{j \in\Gamma_{\B}}$ be families of mutually orthogonal projections so that 
\[
\dg\A = \ca(\{p_i\mid i \in\Gamma_{\A}\}) \mbox{ and } \dg\B = \ca(\{q_j\mid j \in \Gamma_{\B}\}).
\]
By our blanket assumption, the nets of finite sums from $\{p_i\}_{i \in\Gamma_{\A}}$ and $\{q_j\}_{j \in \Gamma_{\B}}$ form approximate units for $\A$ and $\B$ respectively. By Lemma~\ref{l;dg}, we have that $\dg(\A\otimes\K) = \ca(\{p_i\}_{i})\otimes \K$ and by Lemma~\ref{wk} we have that the restriction of $\phi$ on $\ca(\{p_i\}_{i})\otimes \K$ is a $*$-isomorphism onto the diagonal $\ca(\{q_j\}_{j})\otimes \K$ of $\B\otimes \K$. This isomorphism extends to a $*$-isomorphism 
\[
\bar{\phi}: \oM \! \big( \ca(\{p_i\}_{i})\otimes \K \big) \longmapsto \oM\!\big( \ca(\{q_j\}_{j})\otimes \K \big)
\]
between the corresponding multiplier algebras. Now by Proposition~\ref{cor;repl} and the fact that $\oM\!\big( \ca(\{p_i\}_{i}) = \ca(\{p_i\}_{i})''$ we have that
\[
\oZ\!\left( \oM\!\big( \ca(\{p_i\}_{i})\otimes \K \big) \right) = \ca(\{p_i\}_{i})'' \otimes \bbC I,
\]
and so $\bar{\phi}$ maps $\{p_i\otimes1\}_{i \in \Gamma_{\A}}$, i.e., the minimal projections in $\oZ\! \left( \oM \! \big( \ca(\{p_i\}_{i})\otimes \K \big) \right) $, onto $\{q_j\otimes1\}_{i \in \Gamma_{\B}}$ since these are the minimal projections in $\oZ \! \left( \oM\! \big( \ca(\{q_j\}_{j})\otimes \K \big) \right) $. Therefore, by relabelling $\Gamma_{\B}$ if necessary, we may assume that $\Gamma_{\A}=\Gamma_{\B} = \bbI$ and 
\begin{equation} \label{eq;fininf}
\bar{\phi}(p_i\otimes I) = q_i\otimes I,  \mbox{ for all } i \in \bbI. 
\end{equation}
By using Lemma~\ref{l;dg}, we obtain
\begin{align*}
\phi(p_i\otimes \K)&= \bar{\phi}(p_i\otimes I) \phi\big(\ca(\{p_n\}_{n=1}^{\infty}) \otimes \K\big) \\
				&= \bar{\phi}(p_i\otimes I) \phi\big(\dg(\A\otimes\K)\big)\\
			&=(q_i\otimes I) \big(\ca(\{q_n\}_{n=1}^{\infty}) \otimes \K \big) \\
				&=q_i\otimes  \K,
\end{align*}
for all $i \in \bbI$. 
This allows us to view the restriction of $\phi$ on $p_i\otimes \K$ as an automorphism of $\K$ and so Corollary~3 of \cite[Theorem 1.4.4]{Arvbook} implies that for every $i \in \bbI$ there exists a unitary $u_i \in B(\H)$ so that $$\phi(p_i\otimes x)=q_i\otimes u_i^*xu_i, \mbox{ for all } x \in \K.$$
Consider the unitary $U:=\sot-\sum_{i \in \bbI}  q_i\otimes u_i$. Note that on a dense subset of $\B \otimes \K$ we have 
\[
U \spn\{ q_i\B q_j\otimes \K\mid i,j \in \bbI\} U^*= \spn\{ q_i\B q_j\otimes \K\mid i,j \in \bbI\},
\]
and so conjugation by $U$ determines an automorphism of $\B \otimes \K$.
Therefore by conjugating $\phi$ with $U$ if necessary, we may assume that 
\[
\phi(p_i\otimes x)=q_i\otimes x, \mbox{ for all } x \in \K.
\]
Let $e \in \K$ be a rank-one projection and let $i, j \in \bbI$. Notice that 
\begin{align*}
\phi(p_i\A p_j\otimes e) &=\phi\big((p_i\otimes e)(\A \otimes \K )(p_j\otimes e)\big)\\
&=(q_i\otimes e )(\B\otimes \K ) (q_j\otimes e) \\
&= q_i\B q_j\otimes e.
\end{align*}
Therefore for each pair $i, j \in \bbI$ and any $a \in \A$, there exists a unique element $\phi_{ij}(p_iap_j ) \in q_i\B q_j$ satisfying
$\phi(p_iap_j \otimes e)=\phi_{ij}(p_iap_j ) \otimes e$, thus obtaining a surjective linear map 
\[
\phi_{ij}\colon p_i\A p_j\longrightarrow q_i\B q_j.
\]
Putting all the $\phi_{ij}$ together, we obtain a map 
\[
\hphi \colon \spn\{ p_i\A p_j\mid i,j \in \bbI\} \longrightarrow  \spn\{ q_i\B q_j\mid i,j \in \bbI\}.
\]
satisfying
\begin{equation} \label{hphi}
\hphi (a)= \sum_{i,j \in F} \phi_{ij}(p_iap_j), 
\end{equation}
provided that $a = \sum_{i,j \in F} p_iap_j \in \A$, $F\subseteq \bbI$ finite. 
Notice that with such an $ a \in \A$ we have  
\begin{equation} \label{hphiisom}
\begin{split}
\|\hphi (a) \|&= \|\sum_{i, j \in F} \phi_{ij}(p_iap_j)\|  = \|\sum_{i, j \in F} \phi_{ij}(p_iap_j) \otimes e\|  \\
&= \|\sum_{i, j \in F} \phi(p_iap_j \otimes e)\| = \|\phi\big( \sum_{i, j \in F} p_iap_j\otimes e \big)\| \\
& = \| \sum_{i, j \in F} p_iap_j\otimes e \| = \| \sum_{i, j \in F} p_iap_j \| \\
&=\|a\|,
\end{split}
\end{equation}
i.e., $\hphi$ is an isometry. Since finite sums from $\{p_i\}_{i \in \bbI}$ and $\{q_j\}_{j \in \bbI}$ form approximate units for $\A$ and $\B$ respectively, $\hphi$ extends to a linear isometry from $\A$ onto $\B$, denoted again as $\hphi$. 

Finally for any $a,a' \in \A$ and $i, j, s, t \in \bbI$, we have, 
\begin{align*}
\hphi(p_iap_j)\hphi(p_sa'p_t)\otimes e &=\big(\hphi(p_iap_j)\otimes e \big)\big(\hphi(p_sa'p_t)\otimes e \big) \\
          &=\big(\phi_{ij}(p_iap_j)\otimes e \big)\big(\phi_{st}(p_sa'p_t)\otimes e \big) \\
         &= \phi(p_iap_j\otimes e) \phi(p_sa'p_t\otimes e )  \\
          &=\phi(p_iap_jp_sa'p_t\otimes e)\\
          &=\hphi (p_iap_jp_sa'p_t)\otimes e,
\end{align*}
which implies that $\hphi$ is multiplicative and therefore $\A$ and $\B$ are isometrically isomorphic via $\hphi$.
\end{proof}

\begin{corollary}
Let $\A$ and $\B$ be operator algebras with $c_0$-isomorphic diagonals. Then $\A\otimes\K$ and $\B\otimes\K$ are completely isometrically isomorphic if and only if $\A$ and $\B$ are completely isometrically isomorphic. 
\end{corollary}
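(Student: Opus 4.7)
The ``only if'' direction is immediate: any completely isometric isomorphism $\psi \colon \A \to \B$ yields a completely isometric isomorphism $\psi \otimes \id_{\K} \colon \A \otimes \K \to \B \otimes \K$. For the converse, the plan is to rerun the proof of Theorem~\ref{thm;main1} starting from a completely isometric isomorphism $\phi \colon \A \otimes \K \to \B \otimes \K$ and verify that every step respects complete isometry.

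The preparatory reductions in the proof of Theorem~\ref{thm;main1} preserve complete isometry for free. The restriction $\phi|_{\dg(\A \otimes \K)}$ is a $*$-isomorphism (hence completely isometric), its extension $\bar{\phi}$ to multipliers is also a $*$-isomorphism, and the adjustment of $\phi$ by conjugation with the unitary $U = \sot-\sum_{i \in \bbI} q_i \otimes u_i$ is a $*$-automorphism of $\B \otimes \K$. After these reductions $\phi$ remains completely isometric and additionally satisfies $\phi(p_i \otimes x) = q_i \otimes x$ for all $i \in \bbI$ and $x \in \K$.

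The crux is to upgrade the isometry calculation (\ref{hphiisom}) to a complete isometry statement for $\hphi \colon \A \to \B$. Fix a rank-one projection $e \in \K$. Since $b \mapsto b \otimes e$ is a complete isometry of $\B$ into $\B \otimes \K$ (and similarly for $\A$), its matrix amplification $[b_{mn}] \mapsto [b_{mn} \otimes e]$ is a complete isometry $M_N(\B) \hookrightarrow M_N(\B \otimes \K)$ for every $N$. For a matrix $[a_{mn}] \in M_N(\A)$ whose entries are supported on some finite $F \subseteq \bbI$, i.e., $a_{mn} = \sum_{i,j \in F} p_i a_{mn} p_j$, the definition of $\hphi$ in the proof of Theorem~\ref{thm;main1} yields $\hphi(a_{mn}) \otimes e = \phi(a_{mn} \otimes e)$, and hence
\begin{align*}
\|[\hphi(a_{mn})]\|_{M_N(\B)} &= \|[\hphi(a_{mn}) \otimes e]\|_{M_N(\B \otimes \K)} = \|[\phi(a_{mn} \otimes e)]\|_{M_N(\B \otimes \K)} \\
&= \|[a_{mn} \otimes e]\|_{M_N(\A \otimes \K)} = \|[a_{mn}]\|_{M_N(\A)},
\end{align*}
where the middle equality uses complete isometry of $\phi$. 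Such finitely-supported matrices are dense in $M_N(\A)$ for every $N$, so $\hphi$ is completely isometric; its multiplicativity and surjectivity are already established in the proof of Theorem~\ref{thm;main1}.

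There is no substantive new difficulty beyond bookkeeping: every intermediate map in the proof of Theorem~\ref{thm;main1} is either a $*$-isomorphism of $\ca$-algebras, a conjugation by a unitary, or the embedding $\otimes e$ for a rank-one projection $e$, and all three classes consist of completely isometric maps. The only point that truly requires attention is recording the identity $\hphi(\,\cdot\,) \otimes e = \phi(\,\cdot\, \otimes e)$ entrywise in each matrix amplification, which is what makes the scalar-level isometry estimate pass to the matrix level.
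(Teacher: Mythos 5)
Your proof is correct and follows essentially the same route as the paper: the paper likewise reduces to the normalized situation $\phi(p_i\otimes x)=q_i\otimes x$ from the proof of Theorem~\ref{thm;main1} and then observes that complete isometricity of $\hphi$ follows from the matricial analogue of the computation (\ref{hphiisom}), which is exactly the entrywise identity $\hphi(\,\cdot\,)\otimes e=\phi(\,\cdot\,\otimes e)$ you spell out. The only difference is that you write out the matrix-level bookkeeping explicitly, which the paper leaves implicit.
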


\begin{proof}
If two operator algebras $\A$ and $\B$ are completely isometrically isomorphic, then the same is true for $\A\otimes \K$ and $\B\otimes \K$. 

Conversely assume that $\A\otimes\K$ and $\B\otimes\K$ are completely isometrically isomorphic via a map $\phi:\A\otimes\K \rightarrow \B\otimes\K$. Arguing as in the proof of the previous Theorem, we may assume that $
\phi(p_i\otimes x)=q_i\otimes x$, for all $x \in \K$,  and therefore define a multiplicative map $\hphi : \A \rightarrow \B$ as in (\ref{hphi}). The complete isometricity of $\hphi$ follows from the matricial analogue of (\ref{hphiisom}).
\end{proof}

If $\G$ is a countable directed graph, then $\T^+(\G)$ denotes the tensor algebra of $\G$, i.e., the non-selfadjoint subalgebra of the Toeplitz-Cuntz-Krieger $\ca$-algebra $\T(\G)$ generated by the isometries and projections satisfying the defining relations of $\T(\G)$. (See \cite{KK06b, MS98b} for more information.) The following strengthens \cite[Theorem 6.4]{DEG} by posing no restrictions on the graphs involved.

\begin{corollary} \label{cor;graph}
If $\G$ and $\G'$ are countable directed graphs, then the following are equivalent:
\begin{itemize}
\item[\textup{(i)}] $\G$ and $\G'$ are isomorphic graphs;
\item[\textup{(ii)}] $\T^+(\G)$ and $\T^+(\G')$ are bicontinuously isomorphic;
\item[\textup{(iii)}] $\T^+(\G)\otimes \K$ and $\T^+(\G')\otimes \K$ are (completely) isometrically isomorphic.
\end{itemize} 
\end{corollary}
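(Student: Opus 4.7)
The plan is to prove the equivalences by closing the cyclic chain (i) $\Rightarrow$ (iii) $\Rightarrow$ (ii) $\Rightarrow$ (i), in which the decisive step is (iii) $\Rightarrow$ (ii) and will be deduced directly from Theorem~\ref{thm;main1}.

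For (i) $\Rightarrow$ (iii), a graph isomorphism $\G\simeq\G'$ induces a bijection of vertices and edges that preserves ranges and sources, and therefore lifts to a completely isometric isomorphism of the Toeplitz--Cuntz--Krieger algebras that restricts to a completely isometric isomorphism $\T^+(\G)\simeq\T^+(\G')$; tensoring with $\id_\K$ yields the required isometric isomorphism between the stabilisations. At the other end, (ii) $\Rightarrow$ (i) is exactly the Katsoulis--Kribs theorem from \cite{KKribs}, which characterises a countable directed graph up to isomorphism via the bicontinuous isomorphism class of its tensor algebra; I would simply invoke it.

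The substantive implication is (iii) $\Rightarrow$ (ii), for which the main task is to verify that $\T^+(\G)$ fits the hypotheses of Theorem~\ref{thm;main1}. The vertex projections $\{p_v\}_{v\in\G^{(0)}}$ are mutually orthogonal in $\T^+(\G)$, and their finite sums form a contractive approximate unit sitting inside the diagonal, so the blanket assumption is in force. To compute the diagonal, I would use the Fock-space realisation (equivalently, the gauge action $\gamma_z(s_e)=zs_e$, $\gamma_z(p_v)=p_v$): a dense subalgebra of $\T^+(\G)$ is spanned by path monomials $s_\alpha$, and such an $s_\alpha$ lies in $\T^+(\G)^*$ only when $\alpha$ has length zero. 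Consequently $\dg\T^+(\G)=\overline{\Span}\{p_v:v\in\G^{(0)}\}\simeq c_0(\G^{(0)})$, and likewise for $\G'$. Theorem~\ref{thm;main1} then converts the isometric isomorphism $\T^+(\G)\otimes\K\simeq\T^+(\G')\otimes\K$ into an isometric, and in particular bicontinuous, isomorphism $\T^+(\G)\simeq\T^+(\G')$.

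The only point demanding care is the identification $\dg\T^+(\G)\simeq c_0(\G^{(0)})$, but this is standard once one passes to the Fock representation or uses the gauge action. With that identification secured, Theorem~\ref{thm;main1} and the Katsoulis--Kribs theorem combine painlessly to close the cycle of implications.
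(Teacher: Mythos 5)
Your proposal is correct and follows essentially the same route as the paper: the implication (iii) $\Rightarrow$ (ii) via the identification $\dg\T^+(\G)\simeq c_0(\G^{(0)})$ and Theorem~\ref{thm;main1}, the easy direction (i) $\Rightarrow$ (iii) by stabilising the induced isomorphism of Toeplitz--Cuntz--Krieger algebras, and the Katsoulis--Kribs characterisation for the remaining link (the paper cites \cite{KK06b} rather than \cite{KKribs}, but this is immaterial). Your extra justification of the diagonal computation via the gauge action is a welcome detail the paper leaves implicit.
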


\begin{proof}
The equivalence of (i) and (ii) follows from \cite[Theorem 2.11]{KK06b}. 

Clearly (i) implies that $\T(\G)$ and $\T(\G')$ are $*$-isomorphic and so $\T(\G)\otimes \K$ and $\T(\G')\otimes \K$ are canonically $*$-isomorphic. From this it follows that the non-selfadjoint algebras $\T^+(\G)\otimes \K$ and $\T^+(\G')\otimes \K$ are (completely) isometrically isomorphic, i.e., (iii) holds. 

Finally, the diagonal of the tensor algebra of a countable graph is isomorphic to $c_0$ and so (iii) implies (ii) by Theorem~\ref{thm;main1}.
\end{proof}

We are interested in strengthening the previous results by replacing the compact operators $\K$ with more general operator algebras. We begin by providing a substitute for Lemma~\ref{l;dg}. For this purpose we focus on a special class of operator algebras.

Let $(G, P)$ be a discrete, ordered abelian group, i.e., $G$ is a discrete abelian group and $P \subseteq G$ is a cone, i.e., $P+P\subseteq P$ and $P\cap (-P)=\{ e\}$, where $ e \in G$ denotes the unit element. Let $\C$ be a $\ca$-algebra and let $\alpha\colon \hat{G}\rightarrow \Aut (\C)$ be a strongly continuous action of the Pontryagin dual of $G$. This allows us to define the Fourier coefficients for any $c \in \C$ by 
\[
\hat{c}(s):=\int_{\hat{G}} \alpha_{\gamma} (c) \overline{\gamma(s)}d\gamma, \,\, s \in G,
\]
where $d\gamma$ is the Haar measure on $\hat{G}$.
In particular the Fourier coefficient corresponding to the unit $e \in G$ determines a faithful expectation 
\[
E_{\alpha} (c ) \colon \C \longrightarrow \C; c \longmapsto \hat{c}(e).
\]
In this way, given any $c \in \C$ we have a formal Fourier series expansion $c \sim \sum_{s \in G} \hat{c}(s)$ which converges to $c$ in a Cesaro-type summation. In particular every element of $\C$ is uniquely determined by its Fourier coefficients.

\begin{definition} \label{defn;analytic}
An operator algebra $\A$ is said to be \textit{analytic} if there exists a discrete, abelian ordered group $(G,P)$ and a strongly continuous action $\alpha\colon \hat{G}\rightarrow \Aut (\ca(\A)) $ of the dual group leaving $\A$ invariant, so that: 
\begin{itemize}
\item[(i)] $\A\subseteq \{ a \in \ca(\A)\mid \hat{a}(s)=0, \mbox{ for all } s \notin P\}$,
\item[(ii)] $\dg\A=  E_{\alpha}(\A).$
\end{itemize}
If $\A$ satisfies only condition (i), then $\A$ is said to be \textit{quasi-analytic}; in that case it is easy to see that we only have $\dg \A \subseteq E_{\alpha}(\A)$.
\end{definition}

The tensor algebra of any product system over an abelian lattice order $(G, P)$ \cite{DK20} is easily seen to be an analytic algebra via the gauge action of $\hat{G}$. 

\begin{lemma} \label{l;dg0}
Let $\A$ be an analytic operator algebra with abelian diagonal and let $\B$ be any operator algebra. Then 
\[
\dg(\A\otimes \B) = (\dg\A) \otimes (\dg\B).
\]
\end{lemma}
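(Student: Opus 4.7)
My plan is to establish the nontrivial inclusion $\dg(\A\otimes\B)\subseteq(\dg\A)\otimes(\dg\B)$; the reverse inclusion is immediate, since for $a\in\dg\A$ and $b\in\dg\B$ one has $(a\otimes b)^*=a^*\otimes b^*\in\A\otimes\B$. I would proceed in two steps: first show $\dg(\A\otimes\B)\subseteq \dg\A\otimes\B$ using the analytic structure of $\A$, and then leverage the commutativity of $\dg\A$ via the identification $\dg\A\otimes\B\cong C_0(X,\B)$ to tighten this to $\dg\A\otimes\dg\B$.

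For the first step, I would let $(G,P)$ and $\alpha\colon\hat G\to\Aut(\ca(\A))$ witness the analyticity of $\A$ and consider the amplified action $\tilde\alpha:=\alpha\otimes\id$ on $\ca(\A)\otimes\ca(\B)$, whose fixed-point expectation is $E_\alpha\otimes\id$. First I would verify that $\A\otimes\B$ is quasi-analytic with respect to $\tilde\alpha$: on elementary tensors one has $\widehat{a\otimes b}(s)=\hat a(s)\otimes b$, which vanishes for $s\notin P$ by Definition \ref{defn;analytic}(i), and the vanishing extends by linearity and continuity to all of $\A\otimes\B$. For $x\in\dg(\A\otimes\B)$, both $x$ and $x^*$ belong to $\A\otimes\B$, so quasi-analyticity forces $\hat x(s)=0$ for $s\notin P$; applying the identity $\widehat{x^*}(s)=\hat x(-s)^*$ together with the same vanishing for $x^*$ yields $\hat x(t)=0$ for $t\notin -P$. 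Hence $\hat x$ is supported on $P\cap(-P)=\{e\}$, and uniqueness of Fourier coefficients gives $x=(E_\alpha\otimes\id)(x)$. By Definition \ref{defn;analytic}(ii), the latter lies in $\dg\A\otimes\B$; the same reasoning applied to $x^*$ shows $x^*\in\dg\A\otimes\B$ as well.

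For the second step, since $\dg\A$ is an abelian $\ca$-algebra it is isometrically isomorphic to $C_0(X)$ for some locally compact Hausdorff space $X$, and the spatial tensor product identifies $\dg\A\otimes\B$ with $C_0(X,\B)$, with the adjoint acting pointwise. Viewing $x$ as a continuous $\B$-valued function $\tilde x$ vanishing at infinity, the fact that $x^*\in\dg\A\otimes\B$ means that $\tilde x(p)^*\in\B$ for every $p\in X$, so $\tilde x(p)\in\B\cap\B^*=\dg\B$. Therefore $\tilde x\in C_0(X,\dg\B)\cong\dg\A\otimes\dg\B$, completing the containment.

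The main obstacle I anticipate is the Fourier-analytic step: confirming that quasi-analyticity transfers cleanly through the amplification $\alpha\otimes\id$ and, more delicately, that the Cesaro-type reconstruction $x=(E_\alpha\otimes\id)(x)$ actually returns $x$ inside $\A\otimes\B$ rather than merely in the enveloping $\ca$-algebra. Once that is in hand, the reduction to $\dg\A\otimes\dg\B$ via the $C_0(X,\B)$-picture is a routine function-algebra identification enabled by the hypothesis that $\dg\A$ is abelian.
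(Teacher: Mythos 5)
Your proposal is correct and follows essentially the same route as the paper: you pass to the amplified action $\alpha\otimes\id$, observe that $\A\otimes\B$ is quasi-analytic so that any diagonal element has Fourier support in $P\cap(-P)=\{e\}$ and hence equals its image under $E_\alpha\otimes\id$, landing in $(\dg\A)\otimes\B$, and then use the abelianness of $\dg\A$ to identify $((\dg\A)\otimes\B)\cap((\dg\A)\otimes\B)^*$ with $C_0(X,\B\cap\B^*)\cong(\dg\A)\otimes(\dg\B)$. The only difference is that you spell out the Fourier-support argument that the paper leaves implicit in its remark that quasi-analyticity gives $\dg\subseteq E_{\alpha\otimes\id}(\cdot)$.
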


\begin{proof}
Let $\alpha\colon \hat{G}\rightarrow \ca(\A)$ be the strongly continuous action of $G$ that determines the analyticity of $\A$ so that $\dg(\A) = E_{\alpha}(\A)$. Now notice that $\A\otimes \B$ is quasi-analytic via the action 
\[
\alpha \otimes \id : \hat{G} \longrightarrow \Aut \ca(\A \otimes \B) 
\]
and so 
\begin{align*}
\dg(\A\otimes \B) &\subseteq E_{\alpha\otimes \id}(\A\otimes \B) \cap E_{\alpha\otimes \id}(\A\otimes \B)^*\\
&= (E_{\alpha} \otimes \id)(\A\otimes \B) \cap (E_{\alpha} \otimes \id)(\A\otimes \B)^*\\
&= ((\dg\A)\otimes \B) \cap  ((\dg\A)\otimes \B)^*.
\end{align*}
Now $\dg\A$ is abelian and so $\dg\A \simeq C_0(X)$ for some locally compact Hausdorff space $X$. Therefore, $$(\dg\A)\otimes \ca(\B)\simeq C_0(X, \ca(\B))$$ and under that identification $(\dg\A)\otimes \B \simeq C_0(X, \B)$ and $((\dg\A)\otimes \B)^* \simeq C_0(X, \B^*)$.  But then under the above identification we have that
\begin{align*}
((\dg\A)\otimes \B) \cap  ((\dg\A)\otimes \B)^*& \simeq C_0(X, \B\cap\B^*) \\
				&\simeq (\dg\A)\otimes (\dg\B)
\end{align*}
and the conclusion follows.
\end{proof}

Note that the proof of Lemma~\ref{l;dg0} establishes something stronger than the isomorphism of two operator algebras. If the algebras $\A$ and $\B$ act on Hilbert spaces $\H_{\A}$ and $\H_{\B}$ respectively, then Lemma~\ref{l;dg0} shows that when we consider $\A\otimes \B$ as a subalgebra of $B(\H_{\A}\otimes \H_{\B})$, then 
\[
(\A\otimes \B)\cap(\A\otimes \B)^*= (\A\cap\A^*)\otimes (\B\cap \B^*)
\]
as sets.

\begin{corollary} \label{l;dgcent}
Let $\A$ be an analytic operator algebra with $c_0$-isomorphic diagonal and let $\fK$ be any operator algebra containing the compact operators. Then 
\[
\oZ\!\big(\oM (\dg( \A \otimes \fK))\big) =(\dg\A)'' \otimes \bbC I .
\]
\end{corollary}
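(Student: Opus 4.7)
The plan is to compose Lemma~\ref{l;dg0} with Proposition~\ref{cor;repl} and then identify the multiplier algebra of $\dg\A$ with its double commutant. First I would apply Lemma~\ref{l;dg0}: since $\A$ is analytic with abelian diagonal $\dg\A \simeq c_0(\Gamma)$, the lemma gives
\[
\dg(\A \otimes \fK) = (\dg\A) \otimes (\dg\fK).
\]
Since $\fK$ contains the selfadjoint $\ca$-algebra $\K$, we have $\K = \K \cap \K^* \subseteq \fK \cap \fK^* = \dg\fK$, so $\dg\fK$ is a $\ca$-algebra containing the compact operators. I may then invoke Proposition~\ref{cor;repl} with $\C := \dg\A$ and with $\dg\fK$ playing the role of $\fK$ in that proposition, obtaining
\[
\oZ(\oM(\dg(\A \otimes \fK))) = \oZ(\oM(\dg\A)) \otimes \bbC I.
\]

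It remains to identify $\oZ(\oM(\dg\A))$ with $(\dg\A)''$. Since $\dg\A \simeq c_0(\Gamma)$ is commutative and is strictly dense in $\oM(\dg\A)$, the multiplier algebra itself is commutative, so $\oZ(\oM(\dg\A)) = \oM(\dg\A)$. Let $\{p_i\}_{i \in \Gamma}$ denote the mutually orthogonal projections generating $\dg\A$ (summing strongly to $I$ by our blanket assumption). A direct calculation shows that for any $T \in \oM(\dg\A)$ the relation $Tp_j \in \dg\A$ combined with the orthogonality of the $p_j$'s forces $Tp_j = \mu_j p_j$ for some $\mu_j \in \bbC$; summing strongly over $j$ gives $T \in (\dg\A)''$. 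Conversely, any $T = \sum_j \mu_j p_j$ with $(\mu_j) \in \ell^\infty(\Gamma)$ multiplies $c_0(\Gamma) = \dg\A$ into itself and so lies in $\oM(\dg\A)$. Hence $\oM(\dg\A) = (\dg\A)''$, and substituting into the previous display yields the claim.

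The proof is essentially a bookkeeping exercise built on the two earlier results, so I do not foresee a serious obstacle. The only care needed is in verifying that $\dg\fK$ contains $\K$, so that Proposition~\ref{cor;repl} applies with $\dg\fK$ in the role of $\fK$, and in the routine identification of $\oM(c_0(\Gamma))$ with $\ell^\infty(\Gamma) = (\dg\A)''$ inside $B(\H)$.
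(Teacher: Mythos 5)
Your proposal is correct and follows essentially the same route as the paper: apply Lemma~\ref{l;dg0} to factor the diagonal, observe that $\dg\fK$ contains $\K$ so that Proposition~\ref{cor;repl} applies with $\C=\dg\A$, and then identify $\oZ(\oM(\dg\A))$ with $(\dg\A)''$. The only difference is that you spell out the identification $\oM(c_0(\Gamma))=\ell^\infty(\Gamma)=(\dg\A)''$, which the paper dismisses as ``easy to see''; your verification of it is accurate.
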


\begin{proof} By Lemma~\ref{l;dg0}, we have that $\dg( \A \otimes \fK) =( \dg\A) \otimes (\dg\fK)$. Since $ \dg \fK$ contains the compact operators as well, Proposition~\ref{cor;repl} implies that  
\[
\oZ\!\big(\oM(( \dg\A) \otimes (\dg\fK) ) \big) = \oZ(\oM(\dg\A) )\otimes \bbC I
\]
and so 
\[
\oZ(\oM(\dg( \A \otimes \fK) )) = \oZ(\oM(\dg\A) )\otimes \bbC I .
\]
 If $\dg \A = \ca(\{p_i\mid i \in\bbN \}) $ for some family $\{p_i \}_{i \in \bbN}$ of mutually orthogonal projections spanning the identity, then it is easy to see that $\oM(\dg\A)  =(\dg \A)''$ and the conclusion follows.
\end{proof}

\begin{lemma} \label{lm;basic}
Let $h_1,h_2,\dots,h_n$ be mutually orthogonal unit vectors in $\H$ and let \break$f_1, f_2, \dots, f_n$ be unit vectors in $\H$ satisfying $\sum_{i\neq j} |\langle f_i,f_j\rangle| <1$. Consider the linear operator
\begin{equation} \label{eq;operator}
T : \spn\{h_1,h_2,\dots, h_n\} \longrightarrow \spn\{f_1, f_2, \dots, f_n\}
\end{equation}
defined by $Th_i=f_i$, $i=1, 2, \dots , n$.
Then,
\begin{equation} \label{eq;bi}
\big(1-\sum_{i\neq j} |\langle f_i,f_j\rangle| \big)^{1/2}\|h\|\leq\|Th\|\leq\big(1+ \sum_{i \neq j} |\langle f_i,f_j\rangle| \big)^{1/2}\|h\|,
\end{equation}
for any $h \in  \spn\{h_1,h_2,\dots, h_n\} $.
\end{lemma}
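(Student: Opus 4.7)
The plan is to express everything in coordinates relative to the orthonormal set $\{h_1,\dots,h_n\}$ and then perform a direct computation of $\|Th\|^2$, controlling the cross terms by the hypothesis $\sum_{i\neq j}|\langle f_i,f_j\rangle|<1$.

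Concretely, I would write $h=\sum_{i=1}^n c_i h_i$, so that by Pythagoras $\|h\|^2=\sum_i|c_i|^2$, and $Th=\sum_i c_i f_i$. Expanding the norm square of $Th$ and separating the diagonal contributions (which give back $\sum_i |c_i|^2$ since each $f_i$ is a unit vector) from the off-diagonal contributions, one obtains
\begin{equation*}
\|Th\|^2 \;=\; \|h\|^2 \;+\; \sum_{i\neq j} c_i\overline{c_j}\,\langle f_i,f_j\rangle .
\end{equation*}
It remains to show that the absolute value of the second sum is at most $\bigl(\sum_{i\neq j}|\langle f_i,f_j\rangle|\bigr)\|h\|^2$; once that is in hand, the two-sided estimate \eqref{eq;bi} follows by taking square roots (the lower bound is non-trivial precisely because the hypothesis $\sum_{i\neq j}|\langle f_i,f_j\rangle|<1$ ensures $1-\sum_{i\neq j}|\langle f_i,f_j\rangle|>0$).

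For the cross-term estimate I would apply the arithmetic-geometric mean inequality $|c_i||c_j|\le\tfrac{1}{2}(|c_i|^2+|c_j|^2)$ and use the symmetry $|\langle f_i,f_j\rangle|=|\langle f_j,f_i\rangle|$ to rewrite
\begin{equation*}
\Bigl|\sum_{i\neq j} c_i\overline{c_j}\,\langle f_i,f_j\rangle\Bigr|
\;\le\; \sum_{i\neq j} |c_i||c_j|\,|\langle f_i,f_j\rangle|
\;\le\; \sum_{i\neq j} |c_i|^2\,|\langle f_i,f_j\rangle|
\;=\; \sum_i |c_i|^2 \sum_{j\neq i}|\langle f_i,f_j\rangle|.
\end{equation*}
Since each individual coefficient satisfies $|c_i|^2\le\|h\|^2$, the latter expression is dominated by $\|h\|^2\sum_{i\neq j}|\langle f_i,f_j\rangle|$, which is exactly the quantity we needed. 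No step here is a serious obstacle; the only point requiring a small amount of care is matching the asymmetric expression $\sum_i|c_i|^2\sum_{j\neq i}|\langle f_i,f_j\rangle|$ to the symmetric quantity $\sum_{i\neq j}|\langle f_i,f_j\rangle|$ appearing in the statement, and that is handled by the coarse (but sufficient) bound $|c_i|^2\le\|h\|^2$.
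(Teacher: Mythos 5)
Your proof is correct, and it reaches the same two-sided estimate by a somewhat more elementary route than the paper's. The paper passes to $|T|$, notes that the matrix of $|T|^2$ in the orthonormal basis $h_1,\dots,h_n$ is the Gram matrix $(\langle f_j,f_i\rangle)_{ij}$, and bounds $\| |T|^2-I\|$ by $S:=\sum_{i\neq j}|\langle f_i,f_j\rangle|$ (zero diagonal, off-diagonal entries summing to $S$ in absolute value); the upper bound then comes from $\| |T|\|^2=\| |T|^2\|\le 1+\| |T|^2-I\|$ and the lower bound from invertibility of $|T|^2$ via a Neumann series, $\| |T|^{-2}\|\le (1-S)^{-1}$. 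You instead evaluate the quadratic form $\|Th\|^2=\langle |T|^2h,h\rangle$ directly in coordinates and bound the off-diagonal contribution by $S\|h\|^2$ using AM--GM together with the symmetry of $|\langle f_i,f_j\rangle|$; since $|T|^2-I$ is self-adjoint this carries exactly the same information as the paper's norm bound, but your version needs neither the polar decomposition and the $C^*$-identity $\| |T|\|^2=\| |T|^2\|$ nor the Neumann-series inversion, and it delivers the lower bound $\|Th\|^2\ge(1-S)\|h\|^2$ in one line. The one step you flag as needing care --- passing from $\sum_i|c_i|^2\sum_{j\neq i}|\langle f_i,f_j\rangle|$ to $\|h\|^2 S$ via $|c_i|^2\le\|h\|^2$ --- is indeed valid and sufficient. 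Both arguments give the same constants; the paper's formulation has the marginal advantage of producing norm bounds for $|T|^{\pm1}$ as operators, which is the form in which the lemma is invoked later, but that is immediate from your inequality as well.
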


\begin{proof}
Since in general $\||T| h\| = \|Th\|$, it is enough to prove the result for $|T|$ instead of $T$. 

Notice that for any $1\leq i,j\leq n$, we have
\[
\langle |T|^2 h_i, h_j\rangle  = \langle Th_i, Th_j\rangle =\langle f_i, f_j\rangle 
\]
and so the matrix of $|T|^2$ with respect to the basis $h_1,h_2,\dots,h_n$ is $( \langle f_j, f_i \rangle)_{ij}$. Hence 
\begin{equation} \label{eq;sum1}
\| |T|\|^2=\| |T|^2\|\leq 1+\| |T|^2 - I \|\leq 1 +\sum_{i \neq j} |\langle f_i,f_j\rangle| .
\end{equation}
On the other hand 
\[
\| I - |T|^2\|\leq \sum_{i \neq j} |\langle f_i,f_j\rangle| <1,
\]
and so $|T|^2$ is invertible and 
\begin{equation} \label{eq;sum2}
\| |T|^{-1} \|^2= \| |T|^{-2}\| \leq \big( 1- \sum_{i \neq j} |\langle f_i,f_j\rangle| \big)^{-1}.
\end{equation}
Putting (\ref{eq;sum1}) and (\ref{eq;sum2}) together we have 
\[
\big(1-\sum_{i\neq j} |\langle f_i,f_j\rangle| \big)^{1/2}\|h\|\leq\||T|h\|\leq\big(1+ \sum_{i \neq j} |\langle f_i,f_j\rangle| \big)^{1/2}\|h\|,
\]
as desired.
\end{proof}

We have arrived to the main result of this section.

 \begin{theorem} \label{thm;main2}
Let $\A$ and $\B$ be analytic operator algebras with $c_0$-isomorphic, separable diagonals and let $\fK_{\A}$ and $\fK_{\B}$ be operator algebras containing the compact operators. If $\A \otimes\fK_{\A}$ and $\B  \otimes\fK_{\B}$ are isometrically isomorphic, then $\dg \fK_{\A}$ and $\dg \fK_{\B}$ are  $*$-isomorphic and the algebras $\A$ and $\B$ are bicontinuously isomorphic. 
 \end{theorem}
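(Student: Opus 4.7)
The plan is to adapt the strategy of Theorem~\ref{thm;main1}, using analyticity as a substitute for the spatial implementation that was available when $\fK_\A=\fK_\B=\K$, and using Lemma~\ref{lm;basic} as a substitute for Arveson's theorem on automorphisms of $\K$.

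\emph{Step 1 (Reduction to matched central projections).} First apply Lemma~\ref{wk} to restrict $\phi$ to a $*$-isomorphism of the diagonals, which by Lemma~\ref{l;dg0} equal $\dg\A\otimes\dg\fK_\A$ and $\dg\B\otimes\dg\fK_\B$. Extend to the multiplier algebras and use Corollary~\ref{l;dgcent} to identify the centres as $(\dg\A)''\otimes\bbC I$ and $(\dg\B)''\otimes\bbC I$, whose minimal projections are $\{p_i\otimes 1\}$ and $\{q_j\otimes 1\}$. After relabelling, $\phi(p_i\otimes 1)=q_i\otimes 1$, and compressing yields for each $i$ a $*$-isomorphism $\alpha_i\colon\dg\fK_\A\to\dg\fK_\B$ with $\phi(p_i\otimes d)=q_i\otimes\alpha_i(d)$. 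This already gives the first assertion that $\dg\fK_\A\simeq\dg\fK_\B$. Moreover, any rank-one projection $e\in\K$ is minimal in $\dg\fK_\A$ (since $eB(\H)e=\bbC e$), so $\alpha_i(e)$ is a rank-one projection lying in $\K\subseteq\dg\fK_\B$; it follows that each $\alpha_i$ restricts to a $*$-automorphism of $\K$ and is therefore implemented on $\K$ by a unitary $u_i\in B(\H)$.

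\emph{Step 2 (Candidate map via a rank-one projection).} Fix $e=|\xi\rangle\langle\xi|\in\K$ rank-one and set $\xi_i:=u_i\xi$. A direct computation shows $\alpha_i(e)\fK_\B\alpha_j(e)=\bbC|\xi_i\rangle\langle\xi_j|$, so for each $a\in p_i\A p_j$ there is a unique $\hat\phi_{ij}(a)\in q_i\B q_j$ with $\phi(a\otimes e)=\hat\phi_{ij}(a)\otimes|\xi_i\rangle\langle\xi_j|$. Assembling yields a linear map $\hat\phi$ on $\spn\{p_i\A p_j\}$, whose multiplicativity is checked as in Theorem~\ref{thm;main1} using only that $\|\xi_j\|=1$.

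\emph{Step 3 (Bicontinuity via Lemma~\ref{lm;basic}).} For a finite sum $a=\sum_{(i,j)\in F}p_iap_j$, let $T$ be the bounded operator with $Th_i=\xi_i$ on an orthonormal system $\{h_i:i\in F\}$ (extended by zero). Then
\[
\phi(a\otimes e)=(I\otimes T)\Bigl(\sum_{i,j\in F}\hat\phi_{ij}(p_iap_j)\otimes|h_i\rangle\langle h_j|\Bigr)(I\otimes T^*).
\]
Since the $|h_i\rangle\langle h_j|$ are genuine matrix units, the middle factor has norm equal to $\|\hat\phi(a)\|$, and since $\phi$ is isometric with $\|\phi(a\otimes e)\|=\|a\|$, one derives
\[
\|T\|^{-2}\|a\|\le\|\hat\phi(a)\|\le\|T^{-1}\|^{2}\|a\|.
\]
Lemma~\ref{lm;basic} bounds $\|T\|$ and $\|T^{-1}\|$ in terms of $\sum_{i\ne j}|\langle\xi_i,\xi_j\rangle|$, and uniform control of this quantity over finite $F$ delivers bicontinuity of $\hat\phi$ on a dense subalgebra and hence on all of $\A$.

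\emph{Main obstacle.} The sticking point is obtaining the uniform bound on $\sum_{i\ne j}|\langle\xi_i,\xi_j\rangle|$ required for Lemma~\ref{lm;basic} to apply across all truncations; a priori these overlaps are arbitrary, since the $u_i$ are only determined up to phase and nothing in Steps~1--2 constrains the relative positions of the vectors $\xi_i=u_i\xi$. The role of analyticity is expected to be precisely to supply this control, by transferring the gauge action on $\A\otimes\fK_\A$ through $\phi$, matching its spectral subspaces against those of the analytic action on $\B$, and exploiting the resulting rigidity to pin down the relative placement of the $u_i$. This contrasts with Theorem~\ref{thm;main1}, where Arveson's theorem allowed a single global unitary conjugation to normalise all the local implementations at once; here, with $\fK_\B$ not necessarily preserved by such a conjugation, the local implementations must instead be managed quantitatively via Lemma~\ref{lm;basic}, producing bicontinuous rather than isometric identification of $\A$ with $\B$.
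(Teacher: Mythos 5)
Your Steps 1--3 reproduce the paper's skeleton accurately, and the reduction to controlling $\sum_{i\neq j}|\langle \xi_i,\xi_j\rangle|$ is exactly right; but the one step you leave open is the heart of the proof, and your guess at how to close it points in the wrong direction. Analyticity is not what controls the overlaps: in the paper it enters only through Lemma~\ref{l;dg0}, to guarantee that $\dg(\A\otimes\fK_{\A})=(\dg\A)\otimes(\dg\fK_{\A})$ when $\fK_{\A}$ is a general operator algebra (Lemma~\ref{l;dg} covers only the case $\fK=\K$), which is what legitimises your Step~1 in the first place. No gauge-action or spectral-subspace argument appears anywhere, and it is doubtful one could work: the unitaries $u_i$ are genuinely unconstrained, and with a single fixed $e=|\xi\rangle\langle\xi|$ nothing prevents all the vectors $\xi_i=u_i\xi$ from coinciding, in which case your $T$ is not even injective and Lemma~\ref{lm;basic} is vacuous.

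The paper closes the gap by abandoning the single fixed rank-one projection. One chooses, inductively in $n$, a \emph{different} diagonal rank-one projection for each $n$, drawn from an infinite orthonormal family $\{h_j\}$: the restriction of $\phi$ to $p_n\otimes\dg\fK_{\A}$ sends the infinite family $\{p_n\otimes h_{jj}\}_j$ to a family $\{q_n\otimes f'_{ii}\}_i$ with the $f'_i$ mutually orthogonal unit vectors, hence weakly null; so one can pick $i_n$ large enough that $|\langle f_m,f'_{i_n}\rangle|\leq \frac{1}{(n-1)2^{n+3}}$ for all $m<n$, and setting $f_n:=f'_{i_n}$ gives $\sum_{i\neq j}|\langle f_i,f_j\rangle|\leq 1/4$ uniformly over all truncations. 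This is where separability of the diagonals is used (to run a countable induction), and it is the only substantive idea missing from your outline; the rest of your proposal (the norm sandwich via $I\otimes T$, multiplicativity, the finite-dimensional case) then goes through essentially as the paper writes it.
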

 
 \begin{proof}
Assume that \[
\phi:\A\otimes\fK_{\A} \longrightarrow \B\otimes\fK_{\B}
\] 
is an isometric isomorphism.
Let $\{p_i\}_{ i \in \bbN}$ and $\{q_j\}_{j \in\bbN}$ be families of mutually orthogonal projections\footnote{\ We allow for the possibility that some of these projections may be zero in order to cover the case where the diagonal is finite dimensional.} so that 
\begin{equation*} 
\dg\A = \ca(\{p_i\mid i  \in \bbN\}) \mbox{ and } \dg \B = \ca(\{q_i \mid i \in \bbN\}).
\end{equation*}
By Lemma~\ref{wk} we have that $\phi$ preserves diagonals and so it extends to a $*$-isomorphism 
\[
\bar{\phi}\colon \oM \! \big(\!\dg(\A\otimes\fK_{\A})\big) \longrightarrow \oM\! \big(\!\dg(\B\otimes\fK_{\B})\big).
\]
Since $\bar{\phi}$ is an isomorphism, it maps the minimal projections of the center of $\oM \big(\!\dg(\A\otimes\fK_{\A})\big)$ onto the minimal projections of the center $\oM\big(\!\dg(\B\otimes\fK_{\B})\big)$. Corollary~\ref{l;dgcent} implies now that, perhaps after relabeling, we have 
\begin{equation} \label{eq;fininf}
\bar{\phi}(p_i\otimes I) = q_i\otimes I,  \mbox{ for all } i \in \bbN. 
\end{equation}
By using Lemma~\ref{l;dg0}, we obtain
\begin{align*}
\phi(p_i\otimes \dg\fK_{\A} )&= \bar{\phi}(p_i\otimes I)) \phi\big(\ca(\{p_n\}_{n=1}^{\infty}) \otimes \dg \fK_{\A} \big) \\
				&= \bar{\phi}(p_i\otimes I)) \phi\big(\dg(\A\otimes \fK_{\A})\big)\\
			&=(q_i\otimes I) \big(\ca(\{q_n\}_{n=1}^{\infty}) \otimes \dg \fK_{\B} \big) \\
				&=q_i\otimes \dg \fK_{\B},
\end{align*}
for all $i \in \bbN$. Hence for each $i \in \bbN$ we may view the restriction of $\phi$ on $p_i\otimes \dg \fK_{\A}$ as an isometric homomorphism, hence a $*$-isomorphism from $ \dg \fK_{\A}$ onto $\dg \fK_{\B}$. Furthermore such an isomorphism is spatially (actually unitarily) implemented by \cite[Theorem 17.7]{Dav} and so the restriction of $\phi$ on $p_i\otimes \dg \fK_{\A}$ maps rank-one operators to rank-one operators.
\vspace{.1in}

Assume now that $\dg \A$ is infinite dimensional and so $\dg \B$ is infinite dimensional as well by (\ref{eq;fininf}).

\vspace{.1in}

\noindent{\textbf{Claim 1.}} \textit{There exists an orthonormal set $\{e_i\}_{i=1}^{\infty}$ and a collection $\{f_i\}_{i=1}^{\infty}$ of unit vectors in $\H$ so that $\phi(p_i\otimes e_{ii})=q_i\otimes f_{ii}$, $i \in \bbN$ and 
\begin{equation} \label{eq;sum}
\sum_{i\neq j} |\langle f_i,f_j\rangle | \leq 1/4.
\end{equation}
}(Here, for vectors $g_i, g_j \in \H$, we write $g_{ij}$ for the rank one operator $g_{ij}(x)=\langle x, g_j\rangle g_i$, $x \in \H$.)

 \vspace{.1in}

\textit{Proof of Claim 1.} Both $\{e_i\}_{i=1}^{\infty}$ and $\{f_i\}_{i=1}^{\infty}$ will be constructed concurrently via induction. Let $e_1\in \H$ be any unit vector. Since $\phi$ is a $*$-isomorphism that preserves rank one operators, we may choose a unit vector $f_1 \in \H$ so that 
\[\phi(p_1\otimes e_{11} ) = q_1\otimes f_{11}.
\]
Extend the singleton $\{ e_1\}$ to an orthonormal set $\{e_1, h_2, h_3, \dots \} \subseteq \H$. Consider vectors $g_2,g_3, \dots$ such that $\phi(p_2 \otimes h_{ii}) = q_2 \otimes g_{ii}$, for all $i \geq 2$.
Since the projections $\{p_2 \otimes h_{ii}\}_{i = 2}^{\infty}$ are mutually orthogonal, the same is true for the projections $\{q_2 \otimes g_{ii}\}_{i = 2}^{\infty}$ and so the set $\{g_i\}_{i= 2}^{\infty}$ is orthonormal. This implies that $\lim_i |\sca{f_1, g_i}| = 0$.
Therefore there exists index $i_2\geq 2$ such that $|\sca{f_1, g_{i_2}}| \leq 2^{-6}$.
Set $e_2 := h_{i_2}$, $f_2 := g_{i_2}$ and notice that so far we have
\[
\phi(p_1 \otimes e_{11}) = q_1 \otimes f_{11},
\phi(p_2 \otimes e_{22}) = q_2 \otimes e_{22}
\text{ and }
|\sca{f_1, f_2}| \leq \frac{1}{2 \cdot 2^{2+3}}.
\]

Suppose now that we have constructed $\{e_1, \dots, e_n\}$ and $\{f_1, \dots, f_n\}$ so that
\[
\phi(p_i \otimes e_{ii}) = q_i \otimes f_{ii}, i =1, \dots, n,
\]
and furthermore satisfying
\[
|\sca{f_i, f_j}| \leq \frac{1}{j \cdot 2^{j+3}}, \foral i < j \leq n.
\]
Extend $\{e_1, \dots, e_n\}$ to an orthonormal set $\{e_1, \dots, e_n, h'_{n+1}, h'_{n+2} \dots \} \subseteq \H$ and consider vectors $g'_{n+1}, g'_{n+2}, \dots$ so that
\[
\phi(p_{n+1} \otimes h'_{ii}) = q_{n+1} \otimes g'_{ii},
\foral i \geq n+1.
\]
Once again the set $\{g'_i\}_{i = n+1}^{\infty} $ is orthonormal. Thus
 $\lim_i |\sca{f_k, g'_i}| = 0$, $k=1, \dots, n$, and so we can choose index ${i_{n+1}}$ so that
 \[
|\sca{f_k, g_{i_{n+1}}}| \leq ((n+1) \cdot 2^{n+4})^{-1}, \mbox{ for all } k=1, \dots, n.
\]
Set $e_{n+1} := h'_{i_{n+1}}$, $f_{n+1} := g'_{i_{n+1}}$ and note that
\[
\phi(p_{n+1} \otimes e_{n+1 \, n+1}) = q_{n+1} \otimes f_{n+1 \, n+1}
\]
and
\[
|\sca{f_k, f_{n+1}}| \leq \frac{1}{(n+1) \cdot 2^{(n+1) + 3}},
\foral
k=1, \dots, n,
\]
which by the inductive hypothesis implies that
\[
|\sca{f_i, f_{j}}| \leq \frac{1}{j \cdot 2^{j + 3}},
\foral
i < j \leq n+1.
\]
This completes the inductive step.

We have obtained an orthonormal set $\{e_i\}_{i =1}^\infty$ and a collection $\{f_i\}_{i=1}^\infty$ of unit vectors such that
\[
\phi(p_i \otimes e_{ii}) = q_i \otimes f_{ii}
\foral i\in \bN,
\text{ and }
|\sca{f_i,f_j}| \leq \frac{1}{j \cdot 2^{j+3}},
\foral
i < j.
\]
It remains to show that $\sum_{i \neq j} |\sca{f_i, f_j}| \leq 1/4$.
For $N \in \bN$ we have that
\begin{align*}
\sum_{i \neq j; i,j \leq N} |\sca{f_i, f_j}|
=
2 \sum_{j=1}^N \sum_{i=1}^j |\sca{f_i, f_j}|
\leq
2 \sum_{j=1}^N \sum_{i=1}^j \frac{1}{j \cdot 2^{j+3}}
=
\frac{1}{4} \sum_{j=1}^N \frac{1}{2^j}.
\end{align*}
Letting $N \to \infty$ we obtain
\[
\sum_{i \neq j} |\sca{f_i,f_j}| \leq \frac{1}{4} \sum_{j=1}^\infty \frac{1}{2^j} = \frac{1}{4},
\]
as required.
This completes the proof of the claim.
 
 \vspace{.1in}
 
Now, we have 
\begin{align*}
\phi(p_i\A p_j\otimes e_{ij}) &=\phi\big((p_i\otimes e_{ii})(\A \otimes \fK_{\A} )(p_j\otimes e_{jj})\big)\\
&=(q_i\otimes f_{ii} )(\B\otimes \fK_{\B} ) (q_j\otimes f_{jj}) \\
&= q_i\B q_j\otimes f_{ij}
\end{align*}
and so for each pair $i, j \in \bbN$ we have a surjective linear map 
\[
\phi_{ij}\colon p_i\A p_j\longrightarrow q_i\B q_j
\]
satisfying $\phi(p_iap_j \otimes e_{ij}) =\phi_{ij}(p_iap_j ) \otimes f_{ij}$, for any $a \in \A$. By putting all the $\phi_{ij}$ together, we obtain once again a map 
\[
\hphi \colon \spn\{ p_i\A p_j\mid i,j \in \bbN\} \longrightarrow  \spn\{ q_i\B q_j\mid i,j \in \bbN\},
\]
satisfying
\[ 
\hphi (a)= \sum_{i,j =1}^{n} \phi_{ij}(p_iap_j), 
\]
provided that 
\begin{equation} \label{eq;a}
a = \sum_{i,j =1}^{n}  p_iap_j \in \A, \,\, n \in \bbN. 
\end{equation}
\vspace{.1in}

\noindent{\textbf{Claim 2.}} \textit{If $a \in \A$ is as in \textup{(\ref{eq;a})}, then }
\[
\frac{1}{2}\|a\|\leq \|\hphi(a)\|\leq 2\|a\|.
\]
 
\textit{Proof of Claim 2.} Indeed let $\H_n$ be any $n$-dimensional subspace of $\H$ containing the first $n$ vectors vectors $\{f_i\}_{i=1}^n$ of Claim 1. Let $\{h_i\}_{i=1}^n$ be an orthonormal basis of $\H_n$ and let $T \in B(\H_n)$ be a linear operator satisfying $Th_i =f_i$, $i=1, 2, \dots, n$. This is precisely the operator $T$ appearing in Lemma~\ref{lm;basic} and note that $T$ satisfies 
\[
\sqrt{3}/2\|x\|\leq \|Tx\| \leq \sqrt{5}/2\|x\|, \,\, x \in \spn\{h_1, h_2, \dots h_n\},
\]
because of (\ref{eq;bi}) and (\ref{eq;sum}). Now the map
\[
\spn\{ q_i\B q_j\mid 1\leq i,j \leq n \}  \ni \sum_{i,j =1}^{n}  q_ib_{ij}q_j \longmapsto  \sum_{i,j =1}^{n}  q_ib_{ij}q_j \otimes h_{ij}, 
\]
is an injective $*$-homomorphism and so an isometry. Hence 
\begin{equation} \label{eq;long}
\begin{split}
\|\hphi(a)\| &= \|  \sum_{i,j =1}^{n} \phi_{ij}(p_iap_j) \| =  \|  \sum_{i,j =1}^{n} \phi_{ij}(p_iap_j)\otimes h_{ij} \|\\
      &= \|  \sum_{i,j =1}^{n} \phi_{ij}(p_iap_j)\otimes T^{-1}f_{ij}(T^{-1})^* \| \\
       &=  \| (I\otimes T^{-1}) (\sum_{i,j =1}^{n} \phi_{ij}(p_iap_j)\otimes f_{ij} ) (I\otimes (T^{-1})^*)\| \\
       &\leq \|T^{-1}\|^2\big\|\sum_{i,j =1}^{n} \phi(p_iap_j\otimes e_{ij}) \big\|  \\
      & \leq \Big(\frac{2}{\sqrt{3}}\Big)^2 \big\| \phi(\sum_{i,j =1}^{n} p_iap_j\otimes e_{ij})  \big\| \leq 2 \big\| \sum_{i,j =1}^{n} p_iap_j\otimes e_{ij}  \big\| \\
      &=2 \big\|\sum_{i,j =1}^{n} p_iap_j  \big\|  = 2\| a \| .
      \end{split}
\end{equation}
Using arguments similar to those of (\ref{eq;long}), we also have
\begin{equation}
\begin{split}
\|a \| &= \| \phi(\sum_{i,j =1}^{n} p_iap_j\otimes e_{ij})  \big\| \\
       &= \| (I\otimes T) (\sum_{i,j =1}^{n} \phi_{ij}(p_iap_j)\otimes h_{ij} ) (I\otimes T^*)\|  \\
       &\leq (\sqrt{5}/2)^{2} \big\| \sum_{i,j =1}^{n} \phi_{ij}(p_iap_j)\otimes h_{ij} \big\| \leq 2 \|\hphi (a)\|,
      \end{split}
\end{equation}
and so the proof of Claim 2 is complete.
 \vspace{.1in}
 
Claim 2 implies that $\hphi$ extends to a bicontinuous liner map from $\A$ onto $\B$. It remains to verify multiplicativity.
If $a,a' \in \A$ and $1\leq i, j, k \leq n$, then 
\begin{equation} \label{eq;lastcase}
\begin{split}
\hphi(p_iap_j)\hphi(p_ja'p_k)\otimes f_{ik} &=\big(\hphi(p_iap_j)\otimes f_{ij} \big)\big(\hphi(p_j a' p_k)\otimes f_{jk} \big) \\
          &=\big(\phi_{ij}(p_iap_j)\otimes f_{ij} \big)\big(\phi_{jk}(p_j a' p_k)\otimes f_{jk} \big) \\
                   &= \phi(p_iap_j\otimes e_{ij}) \phi(p_sa'p_t\otimes e_{jk} )  \\
          &=\phi(p_iap_ja'p_k\otimes e_{ik})\\
          &=\hphi (p_iap_jp_ja'p_k )\otimes f_{ik},
\end{split}
\end{equation}

On the other hand, if $a,a' \in \A$ and $1\leq i,j,k,l \leq n$ with $j\neq k$, then 
\[
\hphi(p_i a p_j) \hphi(p_k a' p_l) = \phi_{ij}(p_i a p_j) \phi_{kl}(p_k a'p_l) = 0 = \hphi(p_i a p_j p_k a' p_l),
\]
since the ranges of $\phi_{ij}$ and  $\phi_{kl}$, i.e., $q_i \B q_j$ and  $q_k \B q_l$ respectively, are orthogonal whenever $j \neq k$. Hence we have multiplicativity in that case as well and the proof of the theorem in the case where $\dg \A$ is infinite dimensional is complete.
 
Assume now that $\dg \A$ is finite dimensional and so 
\[
\dim(\dg\A ) = \dim ( \dg\B) = n \in \bbN
\]
 by (\ref{eq;fininf}). In that case, choose any orthonormal set $\{e_i\}_{i=1}^n$ in $\H$ and let $\{f_i\}_{i=1}^n$ be unit vectors so that $\phi(p_i\otimes e_{ii})=q_i\otimes f_{ii}$, $1\leq i \leq n$. Arguing as in the previous case,
  $$\phi(p_i\A p_j\otimes e_{ij})  = q_i\B q_j\otimes f_{ij}$$ and so for each pair $1\leq i, j \leq n$ we have surjective linear isometries $\phi_{ij}\colon p_i\A p_j\rightarrow q_i\B q_j$
satisfying $\phi(p_iap_j \otimes e_{ij}) =\phi_{ij}(p_iap_j ) \otimes f_{ij}$, for any $a \in \A$. So by putting all the $\phi_{ij}$ together, we obtain once again a bicontinuous map 
\[
\hphi \colon \A = \spn\{ p_i\A p_j\mid 1\leq i, j \leq n\} \longrightarrow  \spn\{ q_i\B q_j\mid 1\leq i, j \leq n \} = \B,
\]
satisfying
\[ 
\hphi (a)= \sum_{i,j =1}^{n} \phi_{ij}(p_iap_j) \mbox{ with } a = \sum_{i,j =1}^{n}  p_iap_j \in \A. 
\]
\vspace{.1in}
Arguing as in (\ref{eq;lastcase}) one can verify that $\hphi$ is multiplicative and the conclusion follows.
 \end{proof}
 
 As an illustrative application of the previous result, we obtain the following variation of Corollary~\ref{cor;graph}.
 
 \begin{corollary} \label{cor;graph2}
Let $\G$ and $\G'$ be countable directed graphs and let $\T_n$ denote the Cuntz-Toeplitz $\ca$-algebra generated by $n$ isometries with orthogonal ranges, $1\leq n < \infty$ . The following are equivalent:
\begin{itemize}
\item[\textup{(i)}] $\G$ and $\G'$ are isomorphic graphs;
\item[\textup{(ii)}] $\T^+(\G)$ and $\T^+(\G')$ are bicontinuously isomorphic;
\item[\textup{(iii)}] $\T^+(\G)\otimes \T_n$ and $\T^+(\G')\otimes \T_n$ are (completely) isometrically isomorphic.
\end{itemize} 
\end{corollary}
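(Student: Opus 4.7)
The plan is to mirror the argument of Corollary~\ref{cor;graph}, but invoke Theorem~\ref{thm;main2} in place of Theorem~\ref{thm;main1}, since now the compact operators have been replaced by $\T_n$. As before, the equivalence of (i) and (ii) is quoted directly from \cite[Theorem 2.11]{KK06b}, so the real content is showing (i)$\Rightarrow$(iii)$\Rightarrow$(ii).

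For (i)$\Rightarrow$(iii), an isomorphism of graphs gives rise to a canonical $*$-isomorphism $\T(\G)\simeq\T(\G')$ (sending generators to generators); tensoring with $\id_{\T_n}$ produces a $*$-isomorphism $\T(\G)\otimes\T_n \simeq \T(\G')\otimes\T_n$ which restricts to an isometric isomorphism of the non-selfadjoint subalgebras $\T^+(\G)\otimes\T_n$ and $\T^+(\G')\otimes\T_n$.

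For (iii)$\Rightarrow$(ii), I would apply Theorem~\ref{thm;main2} with $\A=\T^+(\G)$, $\B=\T^+(\G')$, and $\fK_\A=\fK_\B=\T_n$. Three hypotheses need to be verified. First, $\T^+(\G)$ and $\T^+(\G')$ are analytic: the gauge action of $\bT=\widehat{\bZ}$ on $\T(\G)$ gives a strongly continuous action whose non-negative Fourier coefficient part contains the tensor algebra, and the expectation onto the fixed-point algebra sends $\T^+(\G)$ onto its diagonal (this is precisely the instance of the remark following Definition~\ref{defn;analytic} for the product system over $(\bZ,\bN)$ associated with the graph). Second, the diagonal of $\T^+(\G)$ is the closed span of the (countably many) vertex projections, hence $*$-isomorphic to $c_0(\vrt)$, which is separable; the same holds for $\G'$. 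Third, $\T_n$ contains the compact operators: for $n\geq 2$ this is the well-known fact that the compact ideal of the Cuntz--Toeplitz algebra is generated by $1-\sum_{i=1}^n s_is_i^*$, while for $n=1$ this is the defining property of the classical Toeplitz algebra. With the three hypotheses in place, Theorem~\ref{thm;main2} yields a bicontinuous isomorphism $\T^+(\G)\simeq\T^+(\G')$, which is (ii).

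I do not anticipate a real obstacle: the argument is essentially a matter of checking that the ingredients of Theorem~\ref{thm;main2} apply to graph tensor algebras paired with $\T_n$. The only point requiring care is recording why $\T_n$ contains the compacts (for both $n=1$ and $n\geq 2$), but this is standard.
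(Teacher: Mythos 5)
Your proposal is correct and follows exactly the route the paper takes: the paper's proof is the one-line remark that the argument of Corollary~\ref{cor;graph} goes through verbatim with $\K$ replaced by $\T_n$ and Theorem~\ref{thm;main2} in place of Theorem~\ref{thm;main1}, and your write-up simply makes explicit the hypothesis checks (analyticity via the gauge action, $c_0$-isomorphic separable diagonal, and $\T_n\supseteq\K$) that the paper leaves implicit.
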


\begin{proof}
The proof is identical to that of Corollary~\ref{cor;graph} by replacing $\K$ with $\T_n$ and using Theorem~\ref{thm;main2} instead of Theorem~\ref{thm;main1}.
\end{proof}

Does the statement of the above Corollary remain true if we replace $\T_n$ with the Cuntz $\ca$-algebra $\O_n$, $1\leq n \leq \infty$?

\section{Algebras with richer diagonals}
We begin by reviewing some basic facts from the $K$-theory of $\ca$-algebras. We adopt the terminology and notation of \cite{Rordam}, which we also use as reference, together with \cite{Black}. 

We say that a $\ca$-algebra $\C$ has cancellation if the semigroup $\D(\widetilde{\C})$ (see Definition 2.3.3 in \cite{Rordam}) has cancellation. As Blackadar comments in \cite[V.2.4.13]{Black}, cancellation for $\C$ implies cancellation for $\C\otimes \K$. Hence both unitizations $\widetilde{\C}$ and $\widetilde{\C\otimes \K}$ are stably finite and so their $K_0$-groups, equipped with their positive cones, form ordered abelian groups \cite[Proposition 5.1.5]{Rordam}.

Assume that the $\ca$-algebras $\C_{i}$ have cancellation and furthermore\break 
$K_0(\C_i)\simeq \bbZ$, $i=1,2$. Since $K_0^+(\C_i)\cap (- K_0^+(\C_i)) = \{0\}$, we may assume that $K_0^+(\C_i)\subseteq \bbZ^+$. Therefore any $*$-isomorphism $\phi: \C_1\rightarrow\C_2$ induces a positive isomorphism $K_0(\phi)$ of $K_0$-groups and so it is the identity map on $\bbZ$. Finally recall that if $e \in \K$ is a minimal projection, then the map $\C\ni c\mapsto c\otimes e \in \C\otimes \K$ induces a positive isomorphism at the $K_0$-level and therefore the identity map on $\bbZ$. In particular, if $\C$ is unital, $[1]_{K_0(\C)}=[1 \otimes e]_{K_0(\C \otimes \K)}$.

\begin{theorem} \label{thm;smain}
Let $\A$ and $\B$ be unital operator algebras. Assume that both $\dg \A$ and $\dg\B$ have cancellation and furthermore assume that $$K_0(\dg\A)\simeq  K_0(\dg\B) \simeq \bbZ$$ with $[1]_{K_0(\dg\A)}=[1]_{K_0(\dg\B)}$. Then $\A\otimes\K$ and $\B\otimes\K$ are completely isometrically isomorphic if and only if $\A$ and $\B$ are completely isometrically isomorphic. 
\end{theorem}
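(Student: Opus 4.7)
The forward direction is immediate: any completely isometric isomorphism $\A \to \B$ tensors with the identity on $\K$ to yield a completely isometric isomorphism $\A \otimes \K \to \B \otimes \K$. For the converse, suppose that $\phi\colon \A \otimes \K \to \B \otimes \K$ is a completely isometric isomorphism. My plan is to produce a partial isometry $v$ in $\dg\B \otimes \K$ implementing Murray--von Neumann equivalence between $\phi(1_\A \otimes e)$ and $1_\B \otimes e$ (for a fixed minimal projection $e \in \K$), and then to transport $\phi$ by this $v$ onto the rank-one corners corresponding to $\A$ and $\B$.

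The first step is standard: Lemmas~\ref{wk} and \ref{l;dg} show that $\phi$ restricts to a $*$-isomorphism $\dg\A \otimes \K \to \dg\B \otimes \K$. Setting $q := \phi(1_\A \otimes e)$, I would next establish, using the $K$-theoretic setup in the paragraphs preceding the theorem, that $[q] = [1_\B \otimes e]$ in $K_0(\dg\B \otimes \K)$. Under the canonical identifications $K_0(\dg\A \otimes \K) \simeq K_0(\dg\A) \simeq \bbZ$ and $K_0(\dg\B \otimes \K) \simeq K_0(\dg\B) \simeq \bbZ$ (via $c \mapsto c \otimes e$), the map induced on $K_0$ by $\phi|_{\dg\A \otimes \K}$ is a positive isomorphism of $\bbZ$, hence the identity. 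Combined with the hypothesis $[1_\A]_{K_0(\dg\A)} = [1_\B]_{K_0(\dg\B)}$, this gives $[q] = [1_\A \otimes e] \mapsto [1_\B \otimes e]$.

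Cancellation of $\dg\B$ passes to $\dg\B \otimes \K$ by \cite[V.2.4.13]{Black}, and in a $\ca$-algebra with cancellation the equality of $K_0$-classes of projections lifts to genuine Murray--von Neumann equivalence (any intermediate matrix stabilization is absorbed since $M_n(\dg\B \otimes \K) \simeq \dg\B \otimes \K$). Thus there is a partial isometry $v \in \dg\B \otimes \K$ with $v^*v = 1_\B \otimes e$ and $vv^* = q$. Identifying $\A$ with the corner $\A \otimes e = (1_\A \otimes e)(\A \otimes \K)(1_\A \otimes e)$, and likewise $\B$ with $\B \otimes e$, I would define $\psi\colon \A \to \B$ by the formula
\[
\psi(a) \otimes e \; := \; v^* \phi(a \otimes e) v.
\]
Since $\phi$ carries the corner $\A \otimes e$ bijectively onto $q(\B \otimes \K)q$, and the map $x \mapsto v^* x v$ is a completely isometric algebra isomorphism from $q(\B \otimes \K)q$ onto $(1_\B \otimes e)(\B \otimes \K)(1_\B \otimes e) = \B \otimes e$ (with two-sided inverse $y \mapsto v y v^*$, and multiplicativity from $v v^* = q$ absorbing between corner elements), the composite $\psi$ is a completely isometric isomorphism of $\A$ onto $\B$.

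The principal obstacle is the $K$-theoretic step that produces $v$: the passage from equality of $K_0$-classes to actual Murray--von Neumann equivalence genuinely requires cancellation, and the identification of $K_0(\phi|_{\dg})$ with the identity hinges on having $\bbZ$ as $K_0$ with the units in matching classes. Everything downstream of producing $v$ is routine corner manipulation, in the spirit of classical Morita/stable isomorphism theory for $\ca$-algebras.
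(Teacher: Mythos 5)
Your proposal is correct and follows essentially the same route as the paper: restrict $\phi$ to the diagonal, use the hypotheses to show $[\phi(1\otimes e)]=[1\otimes e]$ in $K_0(\dg\B\otimes\K)$, invoke cancellation to upgrade this to Murray--von Neumann equivalence, and then transport $\phi$ between the rank-one corners. The only (cosmetic) difference is in the last step, where you conjugate directly by the implementing partial isometry $v$, whereas the paper passes to $M_2$ of the unitization and conjugates the amplification $\phi^{(2)}$ by a unitary via \cite[Proposition 2.2.8]{Rordam}; both devices accomplish the same corner identification.
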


\begin{proof}
Assume that we have a completely isometric isomorphism 
\[
\phi: \A\otimes\K \longrightarrow \B\otimes\K.
\]
By Lemma \ref{wk}, the restriction of $\phi$ on $\dg(\A\otimes \K)= (\dg\A)\otimes \K$ becomes an isomorphism onto $\dg(\B \otimes \K)$, which is the identity map on $\bbZ$ at the $K_0$-level, in accordance to our earlier identifications. Let $e\in \K$ be a minimal projection. Then as elements of $\bbZ$,
\begin{align*}
[1\otimes e]_{K_0((\dg\B) \otimes \K)}&=[1]_{K_0(\dg\B)}= [1]_{K_0(\dg \A))} \\
		&=[1 \otimes e]_{K_0((\dg\A)\otimes \K)} \\
		&=[\phi ( 1 \otimes e)]_{K_0((\dg\B)\otimes \K)} 
\end{align*}
Therefore $\phi(1\otimes e) \in  [1\otimes e]_{K_0((\dg\B)\otimes \K)}$ and since $(\dg\B)\otimes \K$ has cancellation, $1\otimes e$ and $\phi(1\otimes e)$ are Murray-von Neumann equivalent projections. By \cite[Proposition 2.2.8]{Rordam}, there exists a unitary $u \in M_2(\widetilde{\dg(\B\otimes\K)})$ so that 
\[
  u \begin{pmatrix}
  \phi( 1\otimes e) & 0 \\
   0 & 0\\
  \end{pmatrix} u^* =  \begin{pmatrix}
  1\otimes e & 0 \\
   0 & 0\\
  \end{pmatrix} .
  \]
Hence by conjugating the completely isometric isomorphism 
 \[
 \phi^{(2)}:M_2( \A\otimes\K) \longrightarrow M_2( \B\otimes\K)
 \]
 by $u$, we obtain a completely isometric isomorphism
 \[
 \psi: M_2( \A\otimes\K) \longrightarrow M_2( \B\otimes\K)
 \]
 with $\psi( \begin{psmallmatrix}  1\otimes e & 0 \\ 0 & 0 \end{psmallmatrix}) = \begin{psmallmatrix}  1\otimes e & 0 \\ 0 & 0 \end{psmallmatrix}$. Combined with the fact that 
 \[ 
 \begin{psmallmatrix}  1\otimes e & 0 \\ 0 & 0
\end{psmallmatrix}
   M_2( \A\otimes\K)
 \begin{psmallmatrix}  1\otimes e & 0 \\ 0 & 0 \end{psmallmatrix}  =
 \begin{psmallmatrix}  \A\otimes \bbC  e & 0 \\ 0 & 0 \end{psmallmatrix} \simeq \A,
  \]
the conclusion follows by restricting $\psi$ to the algebra above.  
  \end{proof}
  
  \begin{remark}
  The condition $[1]_{K_0(\dg(\A))}=[1]_{K_0(\dg(\B))}$ cannot be dropped from the statement of Theorem~\ref{thm;smain}. Indeed, $M_m(\bbC)$ and $M_n(\bbC)$, $m \neq n $, have cancellation and are stably isomorphic but they are not isomorphic. 
  \end{remark}
  
A fundamental class of $\ca$-algebras $\C$ with cancellation, that satisfy $K_0(\C)\simeq \bbZ$, consists of all $\ca$-algebras of the form $\C \simeq C(X)$, with $X$ a contractible compact space. For operator algebras with such diagonals we can offer a much stronger version of Theorem~\ref{thm;smain}.

\begin{lemma} \label{path}
Let $X $ be a contractible, compact Hausdorff space and let $\fK$ be any $\ca$-algebra. Then any projection-valued function in $C(X)\otimes \fK$ is equivalent to a constant function via a unitary in the multiplier algebra.
\end{lemma}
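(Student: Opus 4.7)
The plan is to use contractibility of $X$ to connect $p$ to a constant projection by a norm-continuous path of projections in $C(X) \otimes \fK$, and then to invoke the standard $\ca$-algebraic fact that norm-continuous paths of projections are implemented by a unitary in the multiplier algebra.

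First, I identify $C(X) \otimes \fK$ with $C(X, \fK)$ (a standard identification for $X$ compact Hausdorff), so that a projection $p \in C(X) \otimes \fK$ corresponds to a norm-continuous map $X \to \fK$ whose pointwise values are projections in $\fK$. Contractibility provides a continuous homotopy $H \colon X \times [0,1] \to X$ with $H(\cdot, 0) = \mathrm{id}_X$ and $H(\cdot, 1) \equiv x_0$ for some fixed $x_0 \in X$. Setting $p_t(x) := p(H(x,t))$ gives a family of projections in $C(X, \fK)$ with $p_0 = p$ and $p_1$ equal to the constant function $x \mapsto p(x_0)$. Since $p \circ H$ is uniformly continuous on the compact set $X \times [0,1]$, the assignment $t \mapsto p_t$ is norm-continuous from $[0,1]$ into $C(X, \fK)$.

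Next, I would invoke the standard lifting: for any norm-continuous path $(q_t)_{t \in [0,1]}$ of projections in a $\ca$-algebra $A$, there exists a unitary $u \in \widetilde{A} \subseteq \oM(A)$ with $u q_0 u^* = q_1$. This follows by partitioning $[0,1]$ so that consecutive projections satisfy $\|q_{t_i} - q_{t_{i+1}}\| < 1/2$, invoking the classical straightening element $z_i := q_{t_{i+1}} q_{t_i} + (1 - q_{t_{i+1}})(1 - q_{t_i})$, which satisfies $z_i q_{t_i} = q_{t_{i+1}} z_i$, and then taking its polar decomposition $u_i := z_i |z_i|^{-1}$, yielding a unitary in $\widetilde{A}$ that implements the equivalence between $q_{t_i}$ and $q_{t_{i+1}}$; the desired unitary is the product $u := u_{n-1} \cdots u_0$.

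Applying this to the path $(p_t)$ inside $A = C(X) \otimes \fK$ produces the required unitary in $\oM(C(X) \otimes \fK)$. I expect no serious obstacle beyond bookkeeping: the only point that needs checking is that each straightening element $z_i$ differs from $1$ by an element of $C(X) \otimes \fK$, so that $u_i \in \widetilde{C(X) \otimes \fK} \subseteq \oM(C(X) \otimes \fK)$ and the constructed $u$ genuinely lies in the target multiplier algebra rather than in some larger enveloping object. The invertibility of $z_i$ needed for the polar decomposition is ensured by the standard estimate $\|z_i^* z_i - 1\| \leq 2\|q_{t_i} - q_{t_{i+1}}\|$, which is controlled by the fineness of the partition.
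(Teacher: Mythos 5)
Your argument is correct and follows essentially the same route as the paper: identify $C(X)\otimes\fK$ with $C(X,\fK)$, use the contraction $H$ to produce a norm-continuous path of projections from $p$ to the constant projection $x\mapsto p(x_0)$, and then pass from homotopy to unitary equivalence in the unitization. The only difference is that the paper simply cites \cite[Proposition 2.2.6]{Rordam} for this last step, whereas you unpack its standard proof via the straightening elements $z_i$ and their polar decompositions.
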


\begin{proof}
Let $f \in C(X, \fK)\simeq C(X)\otimes \fK$ be such a projection-valued function. We claim that there exists a point $x_0 \in X$ and a unitary $u \in C(X, B(\H))$ such that $u(x)f(x)u^*(x)=f(x_0)$, for all $x \in X$.

Since $X$ is contractible, there exists a continuous map $h:X\times [0,1]\rightarrow X$ and a point $x_0 \in X$ so that $h(x,0)=x$ and $h(x,1)=x_0$, for all $x \in X$. Let $f_s(x):= f\circ h(x, s)$, $x \in X$. Then the map 
\[
[0,1]\ni s \longmapsto f_s\in C(X, \fK)
\]
establishes a homotopy between $f$ and the constant projection valued function $f_1 \in C(X, \fK)$. The conclusion now follows from \cite[Proposition 2.2.6]{Rordam}.
\end{proof}

 \begin{theorem} \label{thm;contract}
Let $\A$ and $\B$ be analytic operator algebras whose diagonals are isomorphic to the continuous functions on (perhaps distinct) compact, contractible Hausdorff spaces. Let $\fK_{\A}$ and $\fK_{\B}$ be operator algebras containing the compact operators. If $\A \otimes\fK_{\A}$ and $\B  \otimes\fK_{\B}$ are isometrically isomorphic, then $\A$ and $\B$ are isometrically isomorphic. 
 \end{theorem}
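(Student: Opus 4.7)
The plan is to exhibit an isometric isomorphism $\A\cong\B$ through a corner reduction analogous to Theorem~\ref{thm;smain}, but with contractibility (via Lemma~\ref{path}) playing the role that $K_0$-cancellation plays there.

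Write $\phi\colon\A\otimes\fK_{\A}\rightarrow\B\otimes\fK_{\B}$ for the given isometric isomorphism. By Lemma~\ref{wk} together with Lemma~\ref{l;dg0}, $\phi$ restricts to a $*$-isomorphism of diagonals $C(X)\otimes\dg\fK_{\A}\rightarrow C(Y)\otimes\dg\fK_{\B}$. Fix a rank-one projection $e\in\K\subseteq\dg\fK_{\A}$ (so that $e\fK_{\A}e=\bbC e$) and set $p:=\phi(1\otimes e)\in C(Y)\otimes\dg\fK_{\B}$, viewed as a projection-valued continuous function on $Y$. Since $Y$ is compact and contractible, Lemma~\ref{path} supplies a unitary $u$ in the multiplier algebra of $C(Y)\otimes\dg\fK_{\B}$ and a projection $f_0:=p(y_0)\in\dg\fK_{\B}$ satisfying $upu^*=1\otimes f_0$.

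Next, I would pass to corners. Set $v:=up$; the multiplier property gives $v\in C(Y)\otimes\dg\fK_{\B}\subseteq\B\otimes\fK_{\B}$, and a direct computation gives $v^*v=p$ and $vv^*=1\otimes f_0$. Conjugation by $v$, which takes place entirely inside $\B\otimes\fK_{\B}$, is therefore an isometric algebra isomorphism
\[
p(\B\otimes\fK_{\B})p\;\cong\;(1\otimes f_0)(\B\otimes\fK_{\B})(1\otimes f_0)\;=\;\B\otimes f_0\fK_{\B}f_0.
\]
The choice of rank-one $e$ yields $(1\otimes e)(\A\otimes\fK_{\A})(1\otimes e)=\A\otimes\bbC e\cong\A$ isometrically, and $\phi$ maps this corner onto $p(\B\otimes\fK_{\B})p$. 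Composing produces an isometric isomorphism $\A\cong\B\otimes f_0\fK_{\B}f_0$.

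The crux I expect to be the main obstacle is to verify that $f_0$ has rank one in $B(\H)$: once this is done, $f_0\fK_{\B}f_0\subseteq f_0 B(\H)f_0=\bbC f_0$, and the above isomorphism collapses to $\A\cong\B$. Restricting the above corner isomorphism to its diagonal (using $\dg(qAq)=q\,\dg(A)\,q$ for any projection $q\in\dg A$) and applying $\phi$ yields
\[
\phi\big(C(X)\otimes\bbC e\big)\;=\;p\big(C(Y)\otimes\dg\fK_{\B}\big)p.
\]
Extending the diagonal restriction of $\phi$ to the respective multiplier algebras and identifying the centers with $C(X)\otimes\bbC I$ and $C(Y)\otimes\bbC I$ via Proposition~\ref{cor;repl} produces a $*$-isomorphism $\tau\colon C(X)\rightarrow C(Y)$ with $\phi(h\otimes e)=\tau(h)\cdot p$ for every $h\in C(X)$. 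Evaluating the above equality at an arbitrary fibre $y\in Y$ then gives
\[
\bbC\,p(y)\;=\;p(y)\,\dg\fK_{\B}\,p(y).
\]
Since $\K\subseteq\dg\fK_{\B}$, the $\ca$-algebra $p(y)\K p(y)$---which is the compact operators on the range of $p(y)$---must be at most one-dimensional, so $\rank p(y)\leq 1$. As $p$ is a nonzero continuous projection-valued function on the connected space $Y$, its rank is constant and equal to $1$. In particular $f_0$ is a rank-one projection, completing the proof.
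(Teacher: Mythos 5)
Your proof is correct and follows the same skeleton as the paper's: fix a rank-one $e$, use Lemma~\ref{path} to replace the projection $\phi(1\otimes e)$ by a constant one, cut to the corner $(1\otimes e)(\A\otimes\fK_{\A})(1\otimes e)\simeq\A$, and exploit $\K\subseteq\dg\fK_{\B}$ to force the target projection to have rank one. Two execution details differ, in opposite directions. First, where the paper simply ``conjugates $\phi$ by the unitary'' $u\in\oM(\dg(\B\otimes\fK_{\B}))$ --- which tacitly requires that this conjugation preserve $\B\otimes\fK_{\B}$ --- you implement the identification of the two corners by the partial isometry $v=up$, which lies in $\dg(\B\otimes\fK_{\B})$ itself, so everything happens inside the algebra; this is the tidier and more careful route. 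Second, your rank-one argument is heavier than it needs to be: the paper observes that $(\dg\B)\otimes f_0\K f_0$ sits inside $\phi\bigl((\dg\A)\otimes\bbC e\bigr)$, the isomorphic image of an abelian algebra, and abelianness of $f_0\K f_0$ already forces $\operatorname{rank} f_0=1$. Your detour through Proposition~\ref{cor;repl}, the induced isomorphism $\tau\colon C(X)\to C(Y)$, fibrewise evaluation, and the locally-constant-rank observation on the connected space $Y$ is valid (and yields the slightly stronger identity $p(y)\,\dg\fK_{\B}\,p(y)=\bbC\,p(y)$), but none of it is required once one notes that the corner of the diagonal is abelian. In short: same approach, a cleaner corner identification, and an unnecessarily elaborate rank computation.
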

 
 \begin{proof}
 Assume that $\phi: \A \otimes\fK_{\A} \rightarrow \B \otimes\fK_{\B}$ is an isometric isomorphism and let $e \in \K$ be a rank-one projection. Since isometric isomorphisms map diagonals to diagonals, Lemma~\ref{path} implies that perhaps after conjugating $\phi$ with a unitary, we have $$\phi(1\otimes e) = 1 \otimes f,$$ for some projection $f \in \fK_{\B}$. Notice now that by Lemma~\ref{l;dg0} 
 \begin{align*}
 (1\otimes e)\dg(\A\otimes \fK_{\A})((1\otimes e) &= (\dg\A) \otimes  e (\dg\fK_{\A}) e \\
 						&=(\dg\A) \otimes \bbC e ,
 \end{align*}
which is abelian. Hence 
\begin{align*}
(\dg\B) \otimes ( f \K f ) &\subseteq (1\otimes f)\dg(\B\otimes \fK_{\B})(1\otimes f) \\
&= \phi\big( (1\otimes e)\dg(\A\otimes \fK_{\A})(1\otimes e) \big)
\end{align*}
which is the image of an abelian algebra. This forces $f$ to have rank one and so the restriction of $\phi$ on 
\[(1\otimes e)(\A\otimes \fK_{\A})(1\otimes e) \simeq \A\otimes \bbC e \simeq \A
\]
is an isomorphism onto $(1\otimes f)(\B\otimes \fK_{\B})(1\otimes f)  \simeq \B$ and the conclusion follows.
 \end{proof}
 
 As an immediate application we have the following result that provides a classification scheme for a large class of non-selfadjoint crossed products.

\begin{corollary} \label{motivation2}
Let $X $ be a contractible, compact Hausdorff space and let $\sigma_{i} : X\rightarrow X$, $i=1,2$, be homeomorphisms. Then the following are equivalent:
\itemize
\item[\textup{(i)}] the algebras $C(X)\rtimes_{\sigma_i}\bbZ^+$, $i=1,2$, are {(completely)} isometrically isomorphic;
\item[\textup{(ii)}] the algebras $(C(X)\rtimes_{\sigma_i}\bbZ^+)\otimes \K$, $i=1,2$, are (completely) isometrically isomorphic;
\item[\textup{(iii)}] the algebras $(C(X)\otimes \K^+) \rtimes_{\sigma_i \otimes \Ad \lambda}\bbZ$, $i=1,2$, are (completely) isometrically isomorphic;
\item[\textup{(iv)}]  $\sigma_1$ is conjugate to $\sigma_2$.
\end{corollary}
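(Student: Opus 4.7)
The plan is to establish the cycle $(\text{iv})\Rightarrow(\text{i})\Rightarrow(\text{ii})\Leftrightarrow(\text{iii})$ and then $(\text{ii})\Rightarrow(\text{i})\Rightarrow(\text{iv})$, so that all four conditions are equivalent. The implication $(\text{iv})\Rightarrow(\text{i})$ is immediate: a topological conjugacy $h\colon X\to X$ with $h\circ\sigma_1=\sigma_2\circ h$ induces a $*$-automorphism of $C(X)$ intertwining the two dynamics, which extends to an isometric isomorphism of the semicrossed products. The implication $(\text{i})\Rightarrow(\text{ii})$ is obtained by tensoring with the identity on $\K$. For $(\text{ii})\Leftrightarrow(\text{iii})$ I would simply invoke the identification \eqref{eq;KR16} recalled in the introduction, namely
\[
(C(X)\otimes \K^+)\rtimes_{\sigma_i\otimes\Ad\lambda}\bbZ \simeq (C(X)\rtimes_{\sigma_i}\bbZ^+)\otimes \K,
\]
applied for $i=1,2$, so that the two statements say the same thing after an isometric reshuffling.

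The substantive step is $(\text{ii})\Rightarrow(\text{i})$, which I would deduce from Theorem~\ref{thm;contract}. To do so, I must verify that each algebra $\A_i:=C(X)\rtimes_{\sigma_i}\bbZ^+$ is analytic with diagonal $C(X)$. Analyticity comes from the gauge action: the dual group $\hat{\bbZ}\simeq\bbT$ acts strongly continuously on the enveloping $\ca$-algebra $C(X)\rtimes_{\sigma_i}\bbZ$ by multiplying the implementing unitary $u$ by scalars, and this action leaves $\A_i$ invariant. With respect to the ordered group $(\bbZ,\bbZ^+)$ the Fourier coefficients of any $a\in\A_i$ vanish outside $\bbZ^+$, while the expectation at $0\in\bbZ$ has range exactly $C(X)$, so condition (ii) of Definition~\ref{defn;analytic} holds. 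Since $X$ is compact and contractible and $\fK_{\A_i}=\K$ contains the compact operators, Theorem~\ref{thm;contract} applies and delivers an isometric isomorphism $\A_1\simeq\A_2$, that is, (i).

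Finally, $(\text{i})\Rightarrow(\text{iv})$ is the classical non-selfadjoint crossed product classification: an isometric (indeed, even bicontinuous) isomorphism of the semicrossed products $C(X)\rtimes_{\sigma_i}\bbZ^+$ forces $\sigma_1$ and $\sigma_2$ to be topologically conjugate, a result due to Davidson--Katsoulis (building on earlier work of Arveson, Peters, Power, and Hadwin--Hoover). The main obstacle in the proof is really packaged inside Theorem~\ref{thm;contract} itself; once that machinery is in place, this corollary is a clean assembly of (a) the gauge-action verification of analyticity, (b) the KR16 identification \eqref{eq;KR16} to pivot between the two forms of the stabilized algebras, and (c) citation of the classical classification theorem to close the loop. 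No further $K$-theoretic input is needed for this particular corollary because contractibility of $X$ makes the projection in $C(X)\otimes\K$ constant up to unitary equivalence, which is exactly what Lemma~\ref{path} exploits inside Theorem~\ref{thm;contract}.
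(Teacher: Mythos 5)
Your proposal is correct and follows essentially the same route as the paper: the key implication (ii)$\Rightarrow$(i) via Theorem~\ref{thm;contract}, the identification \eqref{eq;KR16} for (ii)$\Leftrightarrow$(iii), and the Hadwin--Hoover classification for the equivalence of (i) and (iv). Your explicit verification that the semicrossed products are analytic via the gauge action is a worthwhile detail the paper leaves implicit, but it does not change the argument.
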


\begin{proof}
The equivalence of (i) and (iv) follows from \cite[Corollary 3.7]{HH}. The equivalence of (ii) and (iii) was discussed in the introduction of this paper. Theorem \ref{thm;contract} shows that (ii) implies (i). Finally, if (iv) holds then the crossed product $\ca$ algebras $C(X)\rtimes_{\sigma_i}\bbZ$, $i=1,2$, are canonically isomorphic and from this (i) follows.
\end{proof}

Let $\A$ be a $\ca$-algebra and $\alpha$ a $*$-automorphism of $\A$. Let \[
 i \colon \A\otimes \K \longrightarrow (\A\rtimes_{\alpha}\bbZ)\otimes \K
 \]
 be the embedding of $\A\otimes \K$ in $(\A\rtimes_{\alpha}\bbZ)\otimes \K$ induced by the canonical map $\A \hookrightarrow \A\rtimes_{\alpha}\bbZ$, that maps elementary tensors to elementary tensors and let  $U$ denote the universal unitary in $\A\rtimes_{\alpha}\bbZ$. It is easy to see that the pair $(i, U\otimes I)$ forms a covariant representation for the dynamical system $( \A \otimes \K , \alpha\otimes \id)$ that admits a gauge action which fixes $i( \A\otimes \K )$ and ``twists" $U\otimes I$. Hence $(i, U\otimes I)$ integrates to a faithful representation of $(\A \otimes \K)\rtimes_{\alpha\otimes \id}\bbZ $ and therefore we obtain  a canonical isomorphism 
 \begin{equation} \label{eq;cp}
 \sigma_{\alpha} :  (\A\rtimes_{\alpha}\bbZ)\otimes \K  \longrightarrow (\A \otimes \K)\rtimes_{\alpha\otimes \id}\bbZ
 \end{equation}
 satisfying $\sigma_{\alpha}( a U^n \otimes K) = (a\otimes K)(U\otimes I)^n$, $a \in \A$, $K\in \K$, $n \in \bbN$. In  particular $\sigma_{\alpha}$ acts as the identity map on $\A\otimes \K$.
This shows that the study of stable isomorphisms between crossed or semicrossed products reduces to the study of the usual  isomorphism problem between such algebras.
 
 For semicrossed products of \textit{unital} $\ca$-algebras by $*$-automorphisms, Davidson and the first-named author \cite{DavKak} have completely solved the (isometric) isomorphism problem. The more general isomorphism problem for semicrossed products of \textit{unital} $\ca$-algebras by $*$-endomorphisms was  recently resolved by the second-named author and Ramsey in \cite{KR22}. Unfortunately, the arguments in these papers seem to depend heavily on the unitality of the $\ca$-algebras involved  and so these works cannot be combined with the reduction of the previous paragraph in order to solve the stable isomorphism problem, since algebras of the form $\A \otimes \K$ are never unital. We therefore conclude the paper with asking for a solution of the isomorphism problem for the semicrossed products of non-unital $\ca$-algebras.

 \begin{acknow}
The authors thank the anonymous referees for their constructive comments.

Evgenios Kakariadis acknowledges support from EPSRC as part of the programme ``Operator Algebras for Product Systems'' (EP/T02576X/1). 
Elias Katsoulis was partially supported by the NSF grant DMS-2054781.
Xin Li has received funding from the European Research Council (ERC) under the European Union’s Horizon 2020 research and innovation programme (grant agreement No. 817597).
\end{acknow}

\noindent 
{\bf Data availability statement.}
For the purposes of publication of this article, we note that data sharing is not applicable as no datasets were generated or analysed during the underlying research.

\smallskip

\noindent 
{\bf Conflict of interest statement.}
On behalf of all authors, the corresponding author states that there is no conflict of interest.

\smallskip

\noindent 
{\bf Open access statement.}
For the purpose of open access, the second author has applied a Creative Commons Attribution (CC BY) license to any Author Accepted Manuscript version arising.



\begin{thebibliography}{10}

\bibitem{Arv67}  W. Arveson, \textit{Operator algebras and measure preserving automorphisms}, Acta Math. \textbf{118}, (1967) 95--109. 

\bibitem{Arvbook} W. Arveson, \textit{An invitation to $\ca$-algebras}, Graduate Texts in Mathematics \textbf{39}, Springer-Verlag, New York-Heidelberg, 1976. x+106 pp.

\bibitem{Black}  B. Blackadar, 
\textit{Operator algebras. Theory of $\ca$-algebras and von Neumann algebras}, Encyclopaedia of Mathematical Sciences \textbf{122}, Operator Algebras and Non-commutative Geometry, III, Springer-Verlag, Berlin, 2006. xx+517 pp. ISBN: 978-3-540-28486-4

\bibitem{BLM04}
D. Blecher and C. Le~Merdy,
\newblock {\em Operator algebras and their modules---an operator space
  approach}, volume~30 of London Mathematical Society Monographs. New  Series.
\newblock The Clarendon Press, Oxford University Press, Oxford, 2004.
\newblock Oxford Science Publications. 

\bibitem{Dav} K. Davidson, 
\textit{Nest Algebras}, Pitman Reasarch Notes in Mathematics Series 191 (1988), Longman Scientific \& Technical.

\bibitem{DavKak}  K. Davidson and E. Kakariadis, 
\textit{Conjugate dynamical systems on $\ca$-algebras}, Int. Math. Res. Not. IMRN 2014, no. 5, 1289--1311.

\bibitem{DRS11} K. R. Davidson, C. Ramsey and O. M. Shalit,
\textit{The isomorphism problem for some universal operator algebras},
Adv. Math. \textbf{228} (2011), no. 1, 167--218.

\bibitem{DRS15} K. R. Davidson, C. Ramsey and O. M. Shalit,
\textit{Operator algebras for analytic varieties},
Trans. Amer. Math. Soc. \textbf{367} (2015), no. 2, 1121--1150.

\bibitem{DEG} A. Dor-On, S. Eilers and S. Geffen,
\textit{ Classification of irreversible and reversible Pimsner operator algebras}, Compositio Mathematica \textbf{156} (2020), 2510--2535.

\bibitem{DK20} A. Dor-On and E. Katsoulis, 
\textit{Tensor algebras of product systems and their C*-envelopes}, 
J.\ Funct.\ Anal.\ \textbf{278} (2020), no.\ 7, 108416, 32 pp.

\bibitem{DKKLL} A. Dor-On, E. Kakariadis, E. Katsoulis M. Laca and X. Li,
\textit{$\ca$-envelopes for operator algebras with a coaction and co-universal $\ca$-algebras for product systems}, Adv.\ Math.  \textbf{400} (2022), Paper No. 108286. 

\bibitem{HH}  D. Hadwin and T. Hoover, 
\textit{Operator algebras and the conjugacy of transformations}, J. Funct. Anal. \textbf{77} (1988),112--122. 

\bibitem{Kat1} E. Katsoulis,  \emph{Non-selfadjoint operator algebras: dynamics, classification and $C^*$-envelopes}, Recent advances in operator theory and operator algebras, 27--81, CRC Press, Boca Raton, FL, 2018.

\bibitem{KK06b} E. Katsoulis and D. W. Kribs, 
\textit{Tensor algebras of $\ca$-correspondences and their $\ca$-envelopes}, 
J. Funct.\ Anal.\ \textbf{234} (2006), 226--233.

\bibitem{KKribs} E. Katsoulis and D. W. Kribs, \textit{Isomorphisms of algebras associated with directed graphs}, Math. Ann. \textbf{330} (2004), 709--728.

\bibitem{KR22} E. Katsoulis and C. Ramsey, \textit{The isomorphism problem for tensor algebras of multivariable dynamical systems}, Forum of Mathematics, Sigma \textbf{10} (2022), e81.

\bibitem{KR16} E. Katsoulis and C. Ramsey,
\textit{Crossed products of operator algebras}, Mem. Amer.\ Math.\ Soc., Volume 258, Number 1240, 2019.

\bibitem{Kat04b}
T. Katsura, \emph{On $\ca$-algebras associated with {$\ca$}-correspondences}, J.
  Funct. Anal. \textbf{217} (2004), 366--401. \MR{2102572}

\bibitem{MS98b} P. Muhly and B. Solel, 
\textit{Tensor algebras over $\ca$-correspondences: representations, dilations and $\ca$-envelopes} J. 
Funct.\ Anal.\ \textbf{158} (1998), 389--457.

\bibitem{Pet} J. Peters, \textit{Semicrossed products of $\ca$-algebras}, J. Funct. Anal. \textbf{59} (1984), 498--534.

\bibitem{Pim97} M. V. Pimsner,
\textit{A class of C*-algebras generalizing both Cuntz-Krieger algebras and crossed products by $\bZ$},
Free probability theory (Waterloo, ON, 1995), 189--212, Fields Inst. Commun., \textbf{12}, Amer. Math. Soc., Providence, RI, 1997.

\bibitem{Rordam}  M. R\o rdam, F. Larsen and N. Laustsen,
\textit{An introduction to $K$-theory for $\ca$-algebras}, London Mathematical Society Student Texts \textbf{49}, Cambridge University Press, Cambridge, 2000. xii+242 pp. ISBN: 0-521-78334-8

\bibitem{SS16} G. Salomon and O. M. Shalit,
\textit{The isomorphism problem for complete Pick algebras: a survey},
in Operator Theory: Advances and Application, Proceedings of the International Workshop on Operator Theory and Applications (IWOTA 2014), pages 168--198. 
Operator Theory: Advances and Applications, Vol. 255. Birkhauser, Basel, 2016.

\bibitem{SSS18} G. Salomon, O. M. Shalit and E. Shamovich,
\textit{Algebras of bounded noncommutative analytic functions on subvarieties of the noncommutative unit ball},
Trans. Amer. Math. Soc. \textbf{370} (2018), no. 12, 8639--8690.

\bibitem{SSS20} G. Salomon, O. M. Shalit and E. Shamovich,
\textit{Algebras of noncommutative functions on subvarieties of the noncommutative ball: the bounded and completely bounded isomorphism problem},
J. Funct. Anal. \textbf{278} (2020), no. 7, 108427, 54.

\end{thebibliography}
\end{document}